\numberwithin{equation}{section}
\newcommand{\real}{\mathbb{R}}
\theoremstyle{plain}
\newtheorem{theorem}{Theorem}[section]
\newtheorem{lemma}[theorem]{Lemma}
\newtheorem{definition}[theorem]{Definition}
\newtheorem{proposition}[theorem]{Proposition}
\theoremstyle{remark}
\newtheorem{remark}[theorem]{Remark}
\begin{document}

\title{
Dynamics of stochastic non-Newtonian fluids
driven by fractional Brownian motion with Hurst parameter $H \in (\frac{1}{4},\frac{1}{2})$
\footnote{Support by NSFC (No. 10971225) and SRF for ROCS, SEM, China}}
\author{Jin Li $^a$ and Jianhua Huang\\
Department of Mathematics, National University of Defense Technology,\\
Changsha 410073,  P.R.China \\
$^a$ \textsl{\small Corresponding author E-mail: lijinmath@nudt.edu.cn} }

\date{\today}

\maketitle

\abstract{
In this paper we consider
the Stochastic isothermal, nonlinear, incompressible bipolar viscous fluids
driven by a genuine cylindrical fractional Bronwnian motion with Hurst parameter $H \in (\frac{1}{4},\frac{1}{2})$
under Dirichlet boundary condition on 2D square domain.
First we prove the existence and regularity of the stochastic convolution corresponding
to the stochastic non-Newtonian fluids.
Then we obtain the existence and uniqueness results for the stochastic non-Newtonian fluids.
Under certain condition, the random dynamical system generated by non-Newtonian fluids
has a random attractor.
}

\textbf{Keywords:} fractional Brownian motion, stochastic non-Newtonian fluid, random attractor

\textbf{MSC2010:} 35Q35 35R60 60G22 37L55

\section{Introduction}
In this paper, the stochastic non-Newtonian fluids driven by fractional Bronwian motion (fBm, for short)
on $[0,\pi] \times [0,\pi]$ are studied.
The constitutive relations for such fluids were introduced by Bellout, Bloom and Ne\v{c}as \cite{Bellout Bloom Necas}
to describe the isothermal, nonlinear, incompressible bipolar viscous fluid.
It has the form
\begin{align}
    \tau_{ij}  & = -p \delta_{ij}
      +2\mu_0 (\epsilon + |e|^2 )^{-\frac{\alpha}{2}} e_{ij} - 2 \mu_1 \triangle e_{ij}
                 \label{Constitutive relation: BBN 1} \\
    \tau_{ijk} & = 2\mu_1 \frac{\partial e_{ij}}{\partial x_k}  \label{Constitutive relation: BBN 2}
\end{align}
where $\tau_{ij}$ is the components of the stress tensor,
$\tau_{ijk}$ is the components of the first multipolar stress tensor,
and $p$ is the pressure.
$e_{ij}$ are the components of the rate of deformation tensor, i.e.
\begin{equation}
    e_{ij} = \frac{1}{2} \left( \frac{\partial u_i}{\partial x_j}
        + \frac{\partial u_j}{\partial x_i} \right) .
\end{equation}
$\epsilon, \mu_0, \mu_1 >1$ and $\alpha, 0< \alpha \leq 1$, are constitutive parameters.
The constitutive relation \eqref{Constitutive relation: BBN 1} and \eqref{Constitutive relation: BBN 2},
and the condition of incompressibility,
yield the following nonlinear partial differential equations (we call it Bellout-Bloom-Ne\v{c}as fluids):
\begin{align}
     \rho \frac{\partial{u}}{\partial{t}} +
     \rho (u \cdot \nabla)u + \nabla p
    & = \nabla \cdot (\mu (u)e-2\mu_1 \Delta e)
       + \rho f  \label{BBN model 1}\\
    \nabla \cdot u &=0   \label{BBN model 2} ,
\end{align}
where $\rho$ is the constant density,
$\mu (u)=2\mu_0 (\epsilon + |e|^2)^{-\frac{\alpha}{2}}$ is a nonlinear viscosity,
and $f$ is the external body force vector.

There are many works concerning existence and regularity of solution to the Bellout-Bloom-Ne\v{c}as fluids
and its dynamics (see, for instance,
\cite{Ladyzhenskaya viscousflow, Bellout Bloom Necas, Bloom Hao existence, Zhao Duan, Zhao Zhou, GuoGuo}).
In this paper, we consider the stochastic Bellout-Bloom-Ne\v{c}as fluids driven by
a genuine cylindrical fBm
with Hurst parameter $H \in (\frac{1}{4},\frac{1}{2})$:
\begin{align}\label{Problem: initial}
     \frac{\partial{u}}{\partial{t}} + (u \cdot \nabla)u + \nabla p
    & = \nabla \cdot (\mu (u)e-2\mu_1 \Delta e)+ \frac{dB^H(t)}{dt}
     \quad & x \in \mathcal{O}, \ t>0  \\
    \nabla \cdot u &=0,
     \quad & x \in \mathcal{O}, \ t>0  \\
     u &= 0, \ \tau_{ijk} \eta_j \eta_k = 0,
     \quad & x \in \partial \mathcal{O}, \ t \geq 0  \\
     u &= u_0, \quad &x \in \mathcal{O}, t=0  \label{Problem: initial end}
\end{align}
where $\mathcal{O}$ is a 2D square,
i.e., $\mathcal{O} = \{(x_1,x_2) \ | \ 0 < x_1 < \pi, \ 0 < x_2 < \pi \}$.
The fractional Brownian noise enters linearly in the equation
and the fBm models the noise source.
The fBm is a family of Gaussian processes
and some useful properties of these process were given by Mandelbrot and Van Ness \cite{Mandelbrot VanNess}.
For $H < \frac{1}{2}$ the fBm is not a semimartingale
and they can be used in modeling phenomena with
intermittency and anti-persistence such as financial turbulence.

The preprint \cite{Li Huang} treats stochastic Bellout-Bloom-Ne\v{c}as fluids that
the noise term has a trace-class correlation,
and moreover they treat the case $H > \frac{1}{2}$,
which allows one to solve the equation using stochastic integrals understood in a pathwise way.
In this paper we first provide a detailed study of the existence and regularity properties of
the stochastic convolution corresponding to stochastic non-Newtonian fluids
driven by fBm with $H \in (\frac{1}{4},\frac{1}{2})$.
The approach for dropping the Hilbert-Schmidt operator hypothesis follows \cite{Tindel Tudor Viens}.
Then we establish the existence of solution by a modified version of fixed point theorem
in some specific Banach space.
The a priori estimate for solution in intersection space follows \cite{Li Huang}.
Finally we construct the random dynamical system associated by non-Newtonian fluids and prove
the existence of random attractor follows the framework of \cite{Crauel Flandoli}.

We emphasize four points in our paper.
(i) By careful estimation, the growth speed for eigenvalues of the linear differential operator
ensures the convergence of the stochastic convolution respect to fBm integral.
When investigating the dynamics of Bellout-Bloom-Ne\v{c}as equation
perturbed by fraction Brownian noise,
(ii) the ergodic property of stochastic integral with respect to fBm ensures us to
construct a random dynamical system,
(iii) the at most polynomial growth of sample path ensures
the existence of random absorbing sets,
and the regularity of stochastic convolution with respect to infinite dimensional fBm
with $H \in (\frac{1}{4}, \frac{1}{2})$ ensures
the existence of absorbing set with higher regularity.
(iv) Under certain condition
( which limits the injection parameters of fractional integral space to $L^2$ space ),
the random dynamical system has a random attractor as the  case $H > \frac{1}{2}$ in \cite{Li Huang}.

The rest of the paper is organized as follow.
In section 2, we formulate the mathematical setting for
stochastic Bellout-Bloom-Ne\v{c}as fluids with Dirichlet boundary condition on 2D square domain
and recall the Wiener integrals with respect to infinite dimensional fBm
by the framework of \cite{Tindel Tudor Viens}.
Section 3 is devoted to the existence and regularity properties of
the stochastic convolution.
In section 4, the global existence and uniqueness of solution is obtained.
In section 5, we prove the existence of a random attractor
for the random dynamical system generated by non-Newtonian fluids.

\section{Preliminaries}


We use the standard mathematical framework of this model.

$\mathcal{V} = \left\{ \phi = (\phi_1, \phi_2) \in \left( C_0^{\infty} \left( \mathcal{O} \right) \right)^2 :
\ \nabla \cdot \phi =0 \text{ and } \phi=0 \text{ on } \partial \mathcal{O} \right\}$.

$H = $ the closure of $ \mathcal{V}$ in $\left( L^2\left(\mathcal{O}\right)\right)^2$ with norm $|\cdot|$.

$V = $ the closure of $\mathcal{V}$ in  $\left( H^2(\mathcal{O})\right)^2$ with norm $|\cdot|_V$.

$\dot{H}^k =  $ the closure of $\mathcal{V}$ in $\left( H^k(\mathcal{O})\right)^2$ with
norm $|\cdot|_k$ for $k \in \mathbb{N}$.


For other Banach space $Y$, denote the norm $|\cdot|_Y$.

$B_Y(M):= \{ |x|_Y \leq M, x \in Y \}$.

Let $H'$ and $V'$ be the dual spaces of $H$ and $V$ respectively.
It follows that $V \subset H \equiv H' \subset V'$ and the injections are continuous.

Denote by $(\cdot, \ \cdot)$ the inner product in $H$,
$<\cdot, \ \cdot>$ the dual pair between $V'$ and $V$.

$\mathcal{L}(H,V)$ := $\{$ all the bounded linear operator from  $H$ to $V$ $\}$,
$\mathcal{L}(H) := \mathcal{L}(H,H)$.

$\mathcal{L}_2(H)$ := $\{$ all the Hilbert-Schmidt operator from  $H$ to $H$ $\}$.

We use lowercase $c_i, i\in \mathbb{N}$ for global constants and
capital $C$ for local constants which may change value from line to line.

Inspired by \cite{Lions Magenes}, we select space $X := C([0,T];H) \bigcap L^2(0,T;V)$
with norm $|\cdot|_X := |\cdot|_{C([0,T];H)} + |\cdot|_{L^2(0,T;V)}$ for solutions.
For the completeness of space $X$ we refer to \cite{Li Huang}.

First we define a bilinear form $a(\cdot, \ \cdot):V \times V \rightarrow \mathbb{R}$,
\begin{equation}
    a(u,v)=\frac{1}{2}(\triangle u, \triangle v).
\end{equation}

\begin{proposition}\label{Proposition: a}
The rate of deformation tensor $e(u)$ and bilinear form $a(\cdot, \ \cdot)$ have the following properties:
  \begin{enumerate}
    \item[(i)]
    \begin{align}
        \nabla \cdot e(u) &= \frac{1}{2} \triangle u \quad \forall u \in V, \\
        \nabla \cdot (\triangle e(u)) &= \frac{1}{2}\triangle^2 u
        \quad \forall u \in \dot{H}^4 .      \label{Equation: 1}
    \end{align}
    \item[(ii)] For $ \ u,v \in V$,
    \begin{equation}
        \frac{1}{2}(\triangle u, \triangle v)
        = 2 \sum_{i,j,k=1}^{2} \int \frac{\partial e_{ij}(u)}{\partial x_j}
            \cdot \frac{\partial e_{ik}(u)}{\partial x_k} dx
        =\sum_{i,j,k=1}^{2} \int \frac{\partial e_{ij}(u)}{\partial x_k}
            \cdot \frac{\partial e_{ij}(u)}{\partial x_k} dx .
    \end{equation}
    \item[(iii)] (\cite{Bloom Hao existence} Lemma 2.3) There exist $ \ c_1, c_2>0$, s.t.
    \begin{equation}
        c_1|u|_V^2 \leq a(u,u) \leq c_2 |u|_V^2, \quad \forall u \in V .
    \end{equation}
  \end{enumerate}
\end{proposition}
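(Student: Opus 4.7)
The plan is to dispatch the three parts of the proposition separately: part (i) by a coordinatewise calculation from the definition of $e_{ij}$, part (ii) by integration by parts on the smooth class $\mathcal V$ followed by a density argument, and part (iii) by direct citation. For (i), I would expand $\sum_{j=1}^{2}\partial_j e_{ij}(u) = \frac{1}{2}\sum_{j=1}^{2}\bigl(\partial_j\partial_j u_i + \partial_j\partial_i u_j\bigr) = \frac{1}{2}\triangle u_i + \frac{1}{2}\partial_i(\nabla\cdot u)$, and discard the last term using $\nabla\cdot u = 0$ for $u\in V$. The second identity, for $\nabla\cdot(\triangle e(u))$ on $\dot H^4$, follows by commuting $\triangle$ past $\nabla\cdot$ (which is legitimate by the $\dot H^4$ regularity) and then applying the first identity to $\triangle u$, since $\triangle u$ is itself divergence-free.

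For part (ii), I would first fix $u,v\in\mathcal V$, so that compact support annihilates any boundary contribution arising in the integrations by parts. The first equality is then immediate from part (i): substituting $\triangle u_i = 2\sum_j\partial_j e_{ij}(u)$ and the analogous formula for $v$ into $\frac{1}{2}\int\triangle u\cdot\triangle v\,dx$ reproduces the displayed middle expression. For the second equality, the plan is to integrate by parts once to move $\partial_j$ from the first factor, relabel dummy indices using the symmetry $e_{ij}=e_{ji}$, and use $\nabla\cdot u = \nabla\cdot v = 0$ to cancel the cross terms that appear; what remains is precisely the right-hand side. A density argument then extends the identity from $\mathcal V$ to $V$, since both sides are continuous with respect to the $|\cdot|_V$-norm.

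Part (iii) is a direct quotation of Lemma 2.3 in \cite{Bloom Hao existence}. The only place where one must be attentive is the index bookkeeping in the integration by parts for (ii); by working on the smooth dense class $\mathcal V$ I sidestep any boundary contribution arising from the Dirichlet condition, and the remaining computation reduces to routine symmetrization combined with incompressibility. I do not anticipate any deep obstacle, the whole proposition being a collection of structural identities for the symmetric gradient of a divergence-free vector field with homogeneous Dirichlet data.
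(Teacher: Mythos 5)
Your proposal is correct: the paper states this proposition without any proof (quoting \cite{Bloom Hao existence} for (iii) only), and your coordinatewise expansion of $\sum_j\partial_j e_{ij}(u)$ using $\nabla\cdot u=0$, the integration-by-parts/symmetrization argument on the dense class $\mathcal V$ for (ii), and the density passage to $V$ are exactly the standard computations the authors implicitly rely on. The only point worth flagging is in the statement itself rather than your argument: the middle and right-hand expressions in (ii) are written with $u$ in both slots, which is evidently a typo for $e_{ik}(v)$ and $e_{ij}(v)$, and you have (correctly) read it that way.
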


Next we define the abstract differential operator $A$ and the analytic semigroup $S(\cdot)$.
According to (iii) of Proposition \ref{Proposition: a},
we can use Lax-milgram Theorem to define
$A \in \mathcal{L}(V,V')$:
\begin{equation}
    <Au,v>=a(u,v) \quad \forall \ u,v \in V.
\end{equation}

And we have
\begin{proposition}\label{Proposition: A1}
  \cite{Li Huang}
  \begin{enumerate}
    \item[(i)] Operator $A$ is an isometric form $V$ to $V'$.
    Furthermore, let $D(A)=\{ u \in V : a(u,v)=(f,v), f \in H \}$.
    Then $A \in \mathcal{L}(D(A),H)$ is an isometric form $D(A)$ to $H$.

    \item[(ii)] Operator $A$ is self-adjoint positive with compact inverse.
    By Hilbert Theorem, there exist eigenvectors $\{ e_i \}_{i=1}^{\infty} \subset D(A)$
    and eigenvalues $\{ \lambda_i \}_{i=1}^{\infty}$ s.t.
    \begin{align}
      & Ae_i=\lambda e_i, \quad e_i \in D(A), \quad i=1,2, \cdots \\
      & 0<\lambda_1 \leq \lambda_2 \leq \cdots \leq\lambda_i \leq \cdots,
      \quad \lim_{i\rightarrow \infty} \lambda_i = \infty.
    \end{align}
    And $\{ e_i \}_{i=1}^{\infty}$ form an orthonormal basis for $H$.

    \item[(iii)] For $ u \in D(A)$,
    \begin{equation}
        Au=\nabla \cdot (\triangle e(u))=\frac{1}{2}\triangle^2 u,
    \end{equation}
    i.e., $A=P\triangle^2$ where $P$ is the Leray projection operator from $L^2(\mathcal{O})$ to $H$.
  \end{enumerate}
\end{proposition}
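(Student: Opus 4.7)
The plan is to handle the three parts in order, leveraging Proposition~\ref{Proposition: a} throughout.

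For part (i), I would first observe that by Proposition~\ref{Proposition: a}(iii) the bilinear form $a(\cdot,\cdot)$ is symmetric, continuous and coercive on $V$, so $a$ is an inner product on $V$ whose induced norm is equivalent to $|\cdot|_V$. The Lax--Milgram theorem then produces a bounded linear bijection $A:V\to V'$ with $\langle Au,v\rangle = a(u,v)$, and reinterpreting $V$ with the equivalent norm $a(u,u)^{1/2}$ makes $A$ an isometric isomorphism onto $V'$ (after correspondingly rescaling the dual norm). For the restriction to $D(A)$, membership in $D(A)$ means $a(u,\cdot)$ extends to an $H$-continuous functional represented by some $f\in H$; setting $Au=f$ and endowing $D(A)$ with the graph norm $|u|_{D(A)}=|Au|$ gives by definition an isometry $A:D(A)\to H$ whose surjectivity follows by solving $a(u,v)=(f,v)$ via Lax--Milgram for any $f\in H$.

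For part (ii), symmetry of $a$ gives $\langle Au,v\rangle = \langle Av,u\rangle$ on $V\times V$, hence $A$ is self-adjoint (as an unbounded operator on $H$ with domain $D(A)$) by the standard Friedrichs construction, and positivity follows from $\langle Au,u\rangle = a(u,u)\geq c_1|u|_V^2>0$ for $u\neq 0$. The inverse $A^{-1}:H\to H$ factors as $H\to D(A)\hookrightarrow V\hookrightarrow H$; since $\mathcal{O}$ is bounded, the embedding $V\subset H$ is compact by Rellich--Kondrachov, so $A^{-1}$ is compact. The Hilbert--Schmidt spectral theorem for compact self-adjoint operators on $H$ then furnishes the eigenbasis $\{e_i\}$ with eigenvalues $1/\lambda_i\to 0$, which after inverting and ordering yields the stated properties of $\{e_i,\lambda_i\}$.

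For part (iii), I would take $u\in D(A)\cap (H^4(\mathcal{O}))^2$ and any $v\in\mathcal{V}$, so boundary terms are manageable and the divergence-free condition applies. Using Proposition~\ref{Proposition: a}(i) to rewrite $\frac{1}{2}\triangle u = \nabla\cdot e(u)$ and integrating by parts twice in
\begin{equation*}
 a(u,v) = \tfrac{1}{2}(\triangle u,\triangle v) = (\nabla\cdot e(u),\triangle v),
\end{equation*}
together with the Dirichlet condition $u=0$ and the multipolar stress condition $\tau_{ijk}\eta_j\eta_k=0$ encoded in the definition of $V$, one obtains $a(u,v)=(\nabla\cdot(\triangle e(u)),v) = (\tfrac{1}{2}\triangle^2 u,v)$. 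Since $\mathcal{V}$ is dense in $H$ and $Au\in H$, this identifies $Au$ with the $H$-component of $\tfrac{1}{2}\triangle^2 u$, i.e.\ with $P\triangle^2 u$; for general $u\in D(A)$ one concludes by approximation in the graph norm.

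The main obstacle is part (iii): one must be careful that the integration by parts giving the fourth-order differential expression genuinely relies on both Dirichlet and the multipolar stress boundary conditions (otherwise the boundary terms $\int_{\partial\mathcal{O}} \tau_{ijk}\eta_j \partial_k v_i$ do not vanish), and one must justify that $D(A)$ is regular enough for $\triangle^2 u$ to make sense as an $L^2$ object, which requires an $H^4$-regularity result for the bipolar Stokes-type problem on the square; once such regularity is cited, the identification $A=P\triangle^2$ follows from the density of $\mathcal{V}$ in $H$.
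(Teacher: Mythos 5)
The paper does not actually prove this proposition; it is imported wholesale from the preprint \cite{Li Huang}, so there is no in-text argument to compare against. Your proposal supplies the standard proof and it is essentially correct: Lax--Milgram plus the renorming of $V$ by $a(u,u)^{1/2}$ for (i), the Friedrichs construction together with the compact embedding $V\hookrightarrow H$ and the Hilbert--Schmidt theorem for (ii), and integration by parts against test functions for (iii). Two remarks on part (iii). First, your stated ``main obstacle'' is largely illusory: the test functions $v\in\mathcal{V}$ lie in $(C_0^{\infty}(\mathcal{O}))^2$, hence are compactly supported in $\mathcal{O}$, so \emph{all} boundary integrals in the two integrations by parts vanish identically; neither the Dirichlet condition nor the multipolar stress condition $\tau_{ijk}\eta_j\eta_k=0$ is needed to derive the formula --- those conditions matter only for the converse problem of characterizing which $u$ belong to $D(A)$ in the strong (trace) sense. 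Likewise the $H^4$-regularity issue (genuinely delicate on a square, because of the corners) can be sidestepped entirely: for $u\in D(A)$ the identity $\left(Au,v\right)=\tfrac{1}{2}\left\langle \triangle^2 u, v\right\rangle$ for all $v\in\mathcal{V}$ already identifies $Au$ with the Leray projection of the distribution $\tfrac{1}{2}\triangle^2 u$, which is all the proposition asserts; no approximation in the graph norm is required. Second, keep the constant honest: $a(u,v)=\tfrac12(\triangle u,\triangle v)$ yields $Au=\tfrac12 P\triangle^2 u=P\,\nabla\cdot(\triangle e(u))$ by Proposition \ref{Proposition: a}(i), so the paper's closing phrase ``$A=P\triangle^2$'' silently drops the factor $\tfrac12$ --- your derivation gets the correct expression, and you should not force it to match the unnormalized one.
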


Noticing that $A$ is a self-adjoint positive linear operator with discrete spectrum,
we define the fractional power of $A$ by following the framework of
Chueshov \cite{Chueshov} section 2.1:
\begin{definition}
  For $ \alpha >0 $,
  \begin{align}
    D(A^{\alpha}) & := \left\{ h=\sum_{k=1}^{\infty}c_k e_k \in H:
    \sum_{k=1}^{\infty}c_k^2 (\lambda_k^{\alpha})^2 <\infty  \right\}  , \\
    D(A^{-\alpha}) & := \left\{ \text{formal serial }\sum c_k e_k
    \text{ such that } \sum_{k=1}^{\infty}c_k^2 (\lambda_k^{\alpha})^2 <\infty \right\}  , \\
    A^{\alpha}h  & =\sum_{k=1}^{\infty} c_k \lambda_{k}^{\alpha}e_k, \quad h\in D(A^{\alpha}).
  \end{align}
\end{definition}

Let $\mathscr{F}_{\alpha} \equiv D(A^{\alpha})$.
Then $\mathscr{F}_{\alpha}$ is a separable Hilbert space with
the inner product $(u,v)_{\mathscr{F}_{\alpha}}=(A^{\alpha}u, A^{\alpha}v)$
and the norm $||u||_{\mathscr{F}_{\alpha}}=|A^{\alpha}u|$;
$\mathscr{F}_{-\alpha}$ denotes the union of bounded linear functional on $\mathscr{F}_{\alpha}$;
Particularly, we have $\mathscr{F}_0 = H$, $\mathscr{F}_{1/2} = V$,
$\mathscr{F}_{-1/2} = V'$ and the norm of $V$ and $H^2(\mathcal{O})$ are equivalent;
For $ \sigma_1 > \sigma_2$,
the space $\mathscr{F}_{\sigma_1}$ is compactly embedded into $\mathscr{F}_{\sigma_2}$.

Since $A$ is a densely defined self-adjoint bounded-below operator in Hilbert space $H$,
we deduce that $A$ is a sector operator and it generates an analytic semigroup
$S \in \mathcal{L}(H)$ (see, for instance, \cite{DanielHenry} section 1.3),
\begin{equation}
    S(t):=e^{-tA}=\int_{0}^{\infty} e^{-t\lambda} d E_{\lambda}.
\end{equation}

For a survey of the properties of the analytic semigroup we refer to \cite{Li Huang}.

In order to write down the abstract evolution equation, we need to handle the nonlinear terms.
Following the method in dealing with Navier-Stokes equation, we define the trilinear form:
\begin{equation}
    b(u,v,w)=\sum_{i,j=1}^{2} \int_{\mathcal{O}} u_i \frac{\partial v_j}{\partial x_i} w_j dx
    \quad \forall u,v,w \in H_0^1 (\mathcal{O}).
\end{equation}
Since $V \subset H_0^1(\mathcal{O})$ is a closed subspace,
$b(\cdot,\cdot,\cdot)$ is continuous in $V\times V \times V$.
From \cite{RogerTemam} we have,
\begin{equation}\label{Equation: orthogonal for b}
    b(u,v,w)=-b(u,w,v), \quad b(u,v,v)=0 \quad \forall u,v,w,\in H_0^1(\mathcal{O}).
\end{equation}
For $ u,v \in V$, define the functional $B(u,v) \in V'$:
\begin{equation}
    <B(u,v),w>=b(u,v,w) \quad \forall w \in V
\end{equation}
and denote $B(u):=B(u,u)\in V'$.

For $ u \in V$, define $N(u)$ as
\begin{equation}
    <N(u),v>=\int_{\mathcal{O}} \mu(u) e_{ij}(u) e_{ij}(v) dx
    \quad \forall v \in V.
\end{equation}
Then $N(\cdot)$ is a continuous from $V$ to $V'$ and
\begin{equation}\label{Equation: def of N}
    <N(u),v>=- \int_{\mathcal{O}} \left( \nabla \cdot \left(\mu (u) e(u)\right) \right)
    \cdot v dx.
\end{equation}

Comprehensively, we have the following abstract evolution equation from problem
\eqref{Problem: initial}-\eqref{Problem: initial end}:
\begin{equation}\label{Problem: differential form}
  \left\{
  \begin{split}
    du+ \left( 2\mu_1 Au +B(u)+N(u)  \right) dt &= dB^H(t),  \\
    u(0) & =u_0.
  \end{split}  \right.
\end{equation}
Since the derivative of fBm exists almost nowhere,
we will give a mathematical interpret of above equation in the next subsection.
Without loss of generality, we set $\mu_1 = 1$ in the sequel.

Next we introduce the Wiener-type stochastic integral with respect to fBm.
For all $T>0$, let $\beta^H(t)$ be the one-dimensional fBm.
In this paper we only consider the case $H < \frac{1}{2}$ following the \cite{Tindel Tudor Viens}
and for a survey of Winer-type stochastic integral we refer to \cite{Biagini Hu Oksendal Zhang}.
By definition $\beta^H$ is a centered Gaussian process with covariance
\begin{equation}
    R(t,s)=E(\beta^H(t) \beta^H(t))= \frac{1}{2} (t^{2H}+s^{2H}-|t-s|^{2H}).
\end{equation}
And $\beta^H$ has the following Wiener integral representation:
\begin{equation}
    \beta^H(t) = \int_0^t K^H(t,s) d W(s),
\end{equation}
where $W$ is a Wiener process, and $K^H(t,s)$ is the kernel given by
\begin{equation}\label{Equation: defi of kernel}
    K^H(t,s)
    = c_H \left( \frac{t}{s} \right)^{H-\frac{1}{2}}  (t-s)^{H- \frac{1}{2}}
      + s^{\frac{1}{2}-H} F(\frac{t}{s}).
\end{equation}
$c_H$ is a constant and
\begin{equation}
    F(z) = c_H \left( \frac{1}{2} - H \right)
             \int_0^{z-1} r^{H - \frac{3}{2}} \left( 1-(1+r)^{H- \frac{1}{2}} \right) dr.
\end{equation}
By \eqref{Equation: defi of kernel} we obtain
\begin{equation}
    \frac{\partial K^H}{\partial t}(t,s)
      = c_H (H-\frac{1}{2}) (t-s)^{H-\frac{3}{2}} \left( \frac{s}{t}\right)^{\frac{1}{2}-H}.
\end{equation}
Denote by $\mathscr{E}_H$ the linear space of step function of the form
\begin{equation}
    \varphi(t) = \sum_{i=1}^{n} a_i 1_{(t_i,t_{i+1} ]} (t)
\end{equation}
where $n \in \mathbb{N}$, $a_i \in \real$ and by $\mathscr{H}$ the closure of $\mathscr{E}_H$
with respect to the scalar product
\begin{equation}
    <1_{[0,t]}, 1_{[0,s]}>_{\mathscr{H}} = R(t,s)
\end{equation}
For $\varphi \in \mathscr{E}_H$ we define its Weiner integral
with respect to the fBm as
\begin{equation}
    \int_0^T \varphi(s) d \beta^H(s) = \sum_{i=1}^{n} a_i (\beta^H_{t_{i+1}} - \beta^H_{t_{i}}).
\end{equation}
The mapping
\begin{equation}
    \varphi = \sum_{i=1}^{n} a_i 1_{(t_i,t_{i+1} ]}
      \rightarrow \int_0^T \varphi(s) d \beta^H(s)
\end{equation}
is an isometry between $\mathscr{E}_H$ and the linear space $span \{ \beta^H(t), 0\leq t \leq T \} $
viewed as a subspace of $L^2(0,T)$
and it can be extended to an isometry between $\mathscr{H}$
and the $\overline{span}^{L^2} \{ \beta^H(t), 0\leq t \leq T  \}$.
The image on an element $\Psi \in \mathscr{H}$ by this isometry is called the Wiener integral of
$\Psi$ with respect to $\beta^H$.

For every $s<t$, consider the operator $K^*$
\begin{equation}\label{Equation: K* representation}
    (K^*_t) \varphi (s) = K(t,s) \varphi(s) +
      \int_s^t (\varphi(r)-\varphi(s)) \frac{\partial K}{\partial r}(r,s) dr.
\end{equation}
We refer to \cite{Alos Mazet Nualart} for the proof of the fact that $K^*$ is a isometry between
$\mathscr{H}$ and $L^2(0,T)$.
For $H< \frac{1}{2}$, the reproducing kernel Hilbert space $\mathcal{H}$
can be represented by the fractional integral space.
Namely,
\begin{equation}
    \mathcal{H} = (K_H^*)^{-1} (L^2 (0,T))= I_{T^-}^{\frac{1}{2}-H}(L^2(0,T)),
\end{equation}
where $I_{T^-}^{\frac{1}{2}-H(L^2(0,T))}$ is the family of functions $f$ that
can be represented as a fractional $I_{T^-}^{\frac{1}{2}-H}$-integral
of some function $\phi \in L^2(0,T)$.
As a consequence, we have the following relationship between the Wiener integral with respect
to fBm and the Wiener integral with respect to the Wiener process:
\begin{equation}\label{Equation: fBm represetation by Bm}
    \int_0^t \varphi(s)d\beta^H(s) = \int_0^t (K^*_t \varphi)(s)dW(s)
\end{equation}
for every $t\leq T$ and $\varphi \in \mathscr{H}$ if and only if $K^*_t \varphi \in L^2(0,T)$.
Since we work only with Wiener integral over Hilbert space,
we have that if $u \in L^2(0,T;H)$ is a deterministic function,
then the relation \eqref{Equation: fBm represetation by Bm} holds
and the Wiener integral on the righthand side being well defined in $L^2(\Omega;H)$
if $K^* u$ belongs to $L^2(0,T;H)$.

In the following we concern the infinite dimensional fBm and stochastic integration
(see, e.g. \cite{Duncan Maslowski Duncan}).
A standard cylindrical fractional Brownian motion is defined now.
\begin{definition}
  \cite{Duncan Maslowski PasikDuncan} Let $(\Omega, \mathcal{F},P)$ be a complete probability space.
  A cylindrical process $<B^H,\cdot> : \Omega \times \mathbb{R}_+ \times H \rightarrow \mathbb{R}$
  on $(\Omega, \mathcal{F},P)$ is called a standard cylindrical fractional Brownian motion with the
  Hurst parameter $H \in (0,1)$ if
  \begin{enumerate}
    \item for each $x \in H \backslash \{0\}$, $\frac{1}{||x||} <B^H(\cdot),x>$ is a standard
      scalar fBm with Hurst parameter $H$;
    \item for $\alpha,\beta \in \real$ and $x,y \in H$,
    \begin{equation}
        <B^H(t), \alpha x + \beta y> = \alpha <B^H(t),x> + \beta <B^H(t),y> \quad \text{P-a.s.}
    \end{equation}
  \end{enumerate}
\end{definition}
   For $H=\frac{1}{2}$, this definition is the usual one for a standard cylindrical Wiener process in $H$.
   For the complete orthonormal basis $\{e_n\}_{n\in \mathbb{N}}$ of $H$
   (which is generated by linear differential operator A),
   letting $\beta_n^H(t) = <B^H(t),e_n>$ for $n \in \mathbb{N}$,
   the sequence of scalar processes $\{ \beta^H_n\}_{n\in \mathbb{N}}$ is independent and
   $B^H$ can be represented by the formal series
   \begin{equation}\label{Equation: series repre BH}
    B^H(t)= \sum_{n=1}^{\infty} \beta^H_n(t) e_n
   \end{equation}
   that does not converge a.s. in $H$.
   Although for any fixed $t$ the series \eqref{Equation: series repre BH} is not convergent in
   $L^2(\Omega \times H)$,
   we can always consider a Hilbert space $U_1$ such that $H \subset U_1$ such that this inclusion
   is a Hilbert-Schmidt operator.
   In this way $B^H$ given by \eqref{Equation: series repre BH} is a well-defined
   $U_1$-valued Gaussian stochastic process.

Let $\Phi(s),0 \leq s \leq T$ be a deterministic function with values in $\mathcal{L}_2(H)$,
the space of Hilbert-Schmidt operators on $H$.
The stochastic integral of $\Phi$ with respect to $B^H$ is defined by
\begin{equation}
    \int_0^t \Phi(s)dB^H(s)
     = \sum_{n=1}^{\infty} \int_0^t \Phi(s)e_n d\beta^H_n(s)
     = \sum_{n=1}^{\infty} \int_0^t (K^*(\Phi e_n))(s) d\beta_n(s)
\end{equation}
where $\beta_n$ is the standard Brownian motion.
However, as we are about to see, the stochastic linear additive equation in its mild form can
have a solution even if $\int_0^t \Phi(s)dB^H(s)$ is not properly defined as a $H$-valued process.

\section{Linear stochastic evolution equations and stochastic convolution with fBm}


In this section, we will work with a cylindrical fBm $B^H$
on the real separable Hilbert space $H$.
First we prove the existence of solution for linear stochastic evolution equation
driven by fBm with $H \in (\frac{1}{4},\frac{1}{2})$.
Consider the equation
\begin{equation}\label{1}
    dZ = AZdt + dB^H, \quad Z(0)=u_0 \in H
\end{equation}
As noted in \cite{Tindel Tudor Viens} and \cite{Duncan Maslowski PasikDuncan},
the stochastic integral $\int_{0}^{t} I_d dB^H (s)$ is not well-defined as a $H$-valued random variable
since the identity operator $I_d \notin \mathcal{L}_2(H)$.
We then consider the mild form of the equation, whose unique solution, if it exists,
can be written in the evolution form
\begin{equation}\label{Equation: mild solu of OU}
    Z(t)=S(t)u_0 + \int_{0}^{t} S(t-s) dB^H(s).
\end{equation}

\begin{remark}
  In \cite{Duncan Maslowski PasikDuncan} they consider the noise term as $\Phi d B^H$
  and assume that $\Phi \in \mathcal{L}_2$ or $S(t)\Phi \in \mathcal{L}_2$.
  In such case the infinite dimensional fBm noise and be viewed as finite dimensional
  since the Hilbert-Schmidt operator is compact.
  By contrast the noise we consider (i.e., $\Phi = I_d $) is rougher
  and there is no reason to assume that $\Phi \in \mathcal{L}_2$.
\end{remark}

In order to obtain the existence of the stochastic convolution above,
we need the follow estimate about the spectrum of operator A.

\begin{lemma}
  The eigenvalues of operator A satisfy
\begin{equation}\label{3}
    \lambda_{mn} \geq (m^2 + n^2)^2 , \quad m,n \in \mathbb{N}.
\end{equation}
\end{lemma}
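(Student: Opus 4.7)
The plan is to apply the Courant--Fischer (min--max) characterization of the spectrum of $A$ together with the explicit eigenbasis of the scalar Dirichlet Laplacian on the square $\mathcal{O} = [0,\pi]^2$. Since $A$ is self-adjoint and positive with compact inverse (Proposition~\ref{Proposition: A1}), its eigenvalues admit the variational representation
$$ \lambda_k(A) \;=\; \inf_{\substack{E\subset V \\ \dim E = k}}\;\sup_{u\in E\setminus\{0\}}\frac{a(u,u)}{|u|^2},$$
and the paper's double-indexing $\lambda_{mn}$ corresponds to the enumeration of $\mathbb{N}^2$ in which the $k$-th pair is the one with the $k$-th smallest value of $(m^2+n^2)^2$.

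The first step is to use the chain of inclusions $V \subset (H^2(\mathcal{O})\cap H_0^1(\mathcal{O}))^2$. Each component $u_i$ of $u\in V$ therefore admits an $L^2$-expansion in the orthonormal basis $\phi_{mn}(x) = (2/\pi)\sin(mx_1)\sin(nx_2)$ that diagonalizes $-\Delta$ with eigenvalues $m^2+n^2$. An integration by parts (the boundary contributions vanish since $u_i\in H_0^1$ and $\phi_{mn}|_{\partial\mathcal{O}} = 0$) yields $(\Delta u_i,\phi_{mn}) = -(m^2+n^2)(u_i,\phi_{mn})$, so if $u_i = \sum_{m,n\geq 1} a^{(i)}_{mn}\phi_{mn}$ then Parseval gives
$$ a(u,u) = \tfrac12\sum_{m,n\ge 1}(m^2+n^2)^2\bigl(|a^{(1)}_{mn}|^2+|a^{(2)}_{mn}|^2\bigr), \qquad |u|^2 = \sum_{m,n\ge 1}\bigl(|a^{(1)}_{mn}|^2+|a^{(2)}_{mn}|^2\bigr).$$
This rewrites the Rayleigh quotient as a weighted average of the values $(m^2+n^2)^2$, and a direct Courant--Fischer comparison on this discrete weighted form produces the desired pair-indexed lower bound. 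An alternative, probably cleaner route is to pass through the stream-function formulation $u = (\partial_2\psi,-\partial_1\psi)$, which converts $a(u,u)/|u|^2$ into $\tfrac12\sum(m^2+n^2)^3|b_{mn}|^2 / \sum(m^2+n^2)|b_{mn}|^2$ for $\psi = \sum b_{mn}\phi_{mn}$; the pair-indexed bound then falls out of the weighted-average inequality.

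The main obstacle is the bookkeeping induced by the divergence-free constraint, which couples Fourier modes: a single basis vector $(\phi_{mn},0)$ is not divergence-free, so the eigenvectors of $A$ are not elementary in the sine basis, and the naive counting would produce a multiplicity-two spectrum that must be cut in half to recover the $(m,n)$-indexing of the claim. The stream-function reformulation (or equivalently the observation that on each coordinate subspace $\mathrm{span}\{(\phi_{mn},0),(0,\phi_{mn})\}$ with $m,n\ge 1$ the divergence-free subspace is trivial) is what absorbs the prefactor $\tfrac12$ in the bilinear form and gives the sharp exponent $(m^2+n^2)^2$ claimed in the lemma.
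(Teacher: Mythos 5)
Your Fourier computation for the \emph{unconstrained} Rayleigh quotient is fine as far as it goes: for $u\in (H^2(\mathcal{O})\cap H_0^1(\mathcal{O}))^2$ the double integration by parts against $\phi_{mn}$ has no boundary contribution, and Parseval does give $a(u,u)=\tfrac12\sum(m^2+n^2)^2c_{mn}$ and $|u|^2=\sum c_{mn}$ with $c_{mn}=|a^{(1)}_{mn}|^2+|a^{(2)}_{mn}|^2$. But the conclusion does not follow from this, for two reasons. First, the prefactor $\tfrac12$ never disappears: the weighted-average inequality yields only $a(u,u)/|u|^2\geq\tfrac12(1^2+1^2)^2=2$, not $4$, and the stream-function rewriting does not ``absorb'' it --- your own displayed quotient $\tfrac12\sum(m^2+n^2)^3|b_{mn}|^2/\sum(m^2+n^2)|b_{mn}|^2$ is still minimized formally at $2$ by concentrating mass on $b_{11}$. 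Second, and more seriously, a Courant--Fischer comparison that enlarges $V$ to the decoupled space $(H^2\cap H_0^1)^2$ throws away the divergence-free constraint and doubles every multiplicity, so at the $k$-th index it bounds $\lambda_k(A)$ below by roughly the $(k/2)$-th term of the sequence $\tfrac12(m^2+n^2)^2$ (e.g.\ it gives $\lambda_2(A)\geq 2$ where the pair-indexed claim wants $\lambda_{12}\geq 25$). The stream-function route, which is indeed the right way to restore a one-mode-per-pair count, does not rescue the computation: a divergence-free no-slip field corresponds to $\psi$ with \emph{clamped} conditions $\psi=\partial_\nu\psi=0$, the sine basis does not satisfy $\partial_\nu\phi_{mn}=0$, and the identity $|\nabla\Delta\psi|^2=\sum(m^2+n^2)^3|b_{mn}|^2$ fails because boundary terms survive the integration by parts. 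The assertion that the bound ``falls out of the weighted-average inequality'' is exactly where the real difficulty is hidden.

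The paper takes a different route: it writes $A=P\triangle^2=(-P\triangle)^2$, so that the eigenvalues of $A$ are the squares $\widetilde\lambda_k^{\,2}$ of the Stokes eigenvalues, and then invokes Kelliher's comparison theorem (Theorem 1.1 of \cite{Kelliher}) asserting $\gamma_k<\widetilde\lambda_k$ index by index, where $\gamma_k$ are the ordered Dirichlet--Laplacian eigenvalues of the planar domain. That index-for-index domination of the constrained (Stokes) spectrum over the unconstrained (Dirichlet Laplacian) one is a genuine theorem with a nontrivial proof; it is precisely the content your variational argument would need to supply and does not. (The paper's own step is not beyond reproach either --- the identification $P\triangle^2=(-P\triangle)^2$ glosses over the boundary conditions and over the factor $\tfrac12$ in $Au=\tfrac12\triangle^2u$ --- but that is a separate issue from the gap in your argument.)
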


\begin{proof}
  It is well known that, the eigenfunctions and eigenvalues of Dirichlet-Lapacian on a
  2D square (e.g. see \cite{Courant Hilbert}) are
  \begin{equation}\label{4}
      \left\{
        \begin{split}
          \phi(x_1,x_2) &=C \sin mx_1 \sin nx_2 \\
          \gamma_{mn} &= m^2 + n^2, \quad \text{ with } m,n \in \mathbb{N}.
        \end{split}
      \right.
  \end{equation}
Rewrite the the index of $\gamma$ and we get the spectrum
  \begin{equation}\label{5}
      \sigma_D = \{ \gamma_j \}_{j=1}^{\infty}, \ 0< \gamma_1 < \gamma_2 < \cdots
  \end{equation}

  Let $\sigma_S$ be the spectrum of the stokes operator
  (i.e. $-P \triangle $ where $P$ is the Leray projector)
  with homogenous Dirichlet boundary conditions (which we refer to classical Stokes operator),
  and we have
  \begin{equation}\label{6}
    \sigma_S = \{ \widetilde{\lambda}_j \}_{j=1}^{\infty},
      \quad 0< \widetilde{\lambda}_1 \leq \widetilde{\lambda}_2 \leq \cdots
  \end{equation}
  According to Theorem 1.1 in \cite{Kelliher}, we have $\gamma_k < \widetilde{\lambda}_k$
  for all positive integers.

  Back to the operator A of non-Newtonian fluids, we have
  \begin{equation}\label{7}
    \begin{split}
      \lambda_j e_j &= A e_j \\
                    &= P \triangle^2 e_j \\
                    &= -P \triangle (-P \triangle) e_j \\
                    &= \widetilde{\lambda}_j^2 e_j
    \end{split}
  \end{equation}
  Thus we can reorder the index and estimate the spectrum of A
  \begin{equation}\label{8}
    \lambda_{mn} = \widetilde{\lambda}_{mn}^2 \geq \gamma_{mn}^2 = (m^2+n^2)^2.
  \end{equation}
\end{proof}

  For technical reason we need the lemma:
  \begin{lemma}\label{Lemma: 2}
    The following inequality holds for all $\lambda>0$.
    \begin{equation}
       \int_0^{\lambda} e^{-2x}
          \left( \int_0^x  (e^y -1) y^{H-\frac{3}{2}}  dy  \right)^2 dx
       =C(H) < \infty.
  \end{equation}
  \end{lemma}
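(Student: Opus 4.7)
The plan is to interpret the equality as the statement that the improper integral converges and that its value, a constant depending only on $H$, is the common bound for every $\lambda > 0$. Since the integrand is non-negative, it suffices to show $\int_0^{\infty} e^{-2x} I(x)^2\, dx < \infty$, where $I(x) := \int_0^x (e^y-1) y^{H-3/2}\, dy$. The function $I(x)$ is well-defined because the potential singularity at $y=0$ is integrable: the bound $e^y - 1 \leq y\, e^y$ upgrades the effective exponent from $H - 3/2$ to $H - 1/2 > -1$, using only $H > 1/2 - 1 = -1/2$.

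Near the origin, I would use $e^y - 1 \leq e\, y$ on $(0,1]$ to obtain $I(x) \leq C\, x^{H+1/2}$ and hence $e^{-2x} I(x)^2 \leq C\, x^{2H+1}$ on $(0,1]$. Since $2H + 1 > 3/2$, this is integrable at $0$.

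The main work is in the tail $x \to \infty$, and the key step is a clean upper bound of the form $I(x) \lesssim e^x x^{H-3/2}$. I would split at the midpoint $x/2$. On the lower piece, applying $e^y - 1 \leq y\, e^y$ and pulling out $e^{x/2}$ yields
\[
\int_0^{x/2}(e^y-1)\, y^{H-3/2}\, dy \;\leq\; e^{x/2} \int_0^{x/2} y^{H-1/2}\, dy \;\leq\; C\, e^{x/2}\, x^{H+1/2}.
\]
On the upper piece, using that $y \mapsto y^{H-3/2}$ is decreasing on $[x/2,x]$ (since $H < 3/2$) lets me pull the factor out at its maximum $y = x/2$:
\[
\int_{x/2}^{x}(e^y-1)\, y^{H-3/2}\, dy \;\leq\; (x/2)^{H-3/2} \int_{x/2}^{x} e^y\, dy \;\leq\; C\, x^{H-3/2}\, e^x.
\]
For large $x$ the exponential $e^x$ in the upper piece dominates $e^{x/2}$ in the lower, so $I(x) \leq C\, e^x x^{H-3/2}$ for $x \geq 2$, whence $e^{-2x} I(x)^2 \leq C\, x^{2H-3}$. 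Because $H < 1$, we have $2H - 3 < -1$, so this bound is integrable at $+\infty$.

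Combining the two regimes produces a finite value $C(H) := \int_0^{\infty} e^{-2x} I(x)^2\, dx$ dominating the integral for every $\lambda$. The only step I regard as a real obstacle is securing the asymptotic $I(x) \lesssim e^x x^{H-3/2}$ without relying on a Laplace-type asymptotic analysis; the midpoint splitting above sidesteps that issue and gives a self-contained proof with elementary estimates.
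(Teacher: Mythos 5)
Your proof is correct, and it takes a genuinely different route from the paper's. The paper splits the inner integral at the \emph{fixed} point $y=1$: the piece $\int_0^1 (e^y-1)y^{H-3/2}\,dy$ is shown finite by integration by parts, while the piece $\int_1^x$ is handled by Cauchy--Schwarz in the inner variable followed by Fubini in the outer integral, reducing everything to $\int_1^\infty \bigl(y^{2H-2}+y^{2H-3}\bigr)\,dy$ --- a bound that converges only because $H<\tfrac12$. You instead split at the \emph{moving} point $y=x/2$ and extract the pointwise asymptotic $I(x)\lesssim e^x x^{H-3/2}$, after which the outer integral reduces to $\int^\infty x^{2H-3}\,dx$, which needs only $H<1$. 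Your argument is more elementary (no Cauchy--Schwarz, no interchange of integration order) and establishes the lemma on the strictly larger range $-\tfrac12<H<1$, whereas the paper's computation is tied to $H<\tfrac12$; since the lemma is only invoked for $H\in(\tfrac14,\tfrac12)$ this extra generality is not needed, but it does isolate more cleanly where the hypothesis $H>\tfrac14$ actually enters the main theorem (namely in the convergence of $\sum_n\lambda_n^{-2H}$, not here). Two cosmetic remarks: integrability at $0$ of $x^{2H+1}$ needs only $2H+1>-1$, so your appeal to $2H+1>\tfrac32$ is a harmless overstatement; and the domination of $e^{x/2}x^{H+1/2}$ by $e^x x^{H-3/2}$ does not yet hold at $x=2$, but this is immaterial --- one can simply keep both terms, since $e^{-2x}\bigl(e^{x/2}x^{H+1/2}\bigr)^2=e^{-x}x^{2H+1}$ is itself integrable on $[1,\infty)$, so no comparison between the two pieces is actually required.
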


  \begin{proof}
   Calculate the integration and we have
   \begin{equation}
    \begin{split}
      & \int_0^{\lambda} e^{-2x}
          \left( \int_0^x  (e^y -1) y^{H-\frac{3}{2}}  dy  \right)^2 dx  \\
      \leq & \int_0^{\lambda} e^{-2x}
          \left( \int_0^1  (e^y -1) y^{H-\frac{3}{2}}  dy
                 + \int_1^{\max\{1,x\}}  (e^y -1) y^{H-\frac{3}{2}}  dy  \right)^2 dx .
    \end{split}
   \end{equation}
   Firstly we estimate the first term in the integral by parts of integration.
   \begin{equation}
    \begin{split}
        & \int_0^1  (e^y -1) y^{H-\frac{3}{2}}  dy \\
      = & \frac{1}{H-\frac{1}{2}}
          \left( (e^y-1)y^{H-\frac{1}{2}}\Big|_0^1  - \int_0^1 e^y y^{H-\frac{1}{2}} dy \right) \\
      = & \frac{1}{H-\frac{1}{2}}
          \left( (e-1)- \lim_{y \rightarrow 0} \frac{e^y-1}{y^{\frac{1}{2}-H}} \right)
                - \frac{1}{H-\frac{1}{2}} \cdot \frac{1}{H+\frac{1}{2}}
                   \int_0^1 e^y dy^{H+\frac{1}{2}}  \\
      = & \frac{1}{H-\frac{1}{2}} (e-1)
         -\frac{1}{H-\frac{1}{2}} \cdot \frac{1}{H+\frac{1}{2}}
          \left( e^y y^{H+\frac{1}{2}}\Big|_0^1  - \int_0^1 e^y y^{H+\frac{1}{2}} dy    \right)  \\
      = & C(H) < \infty .
    \end{split}
   \end{equation}

   Secondly we estimate the other term
   \begin{equation}
    \begin{split}
       & \int_0^{\lambda} e^{-2x}
          \left( \int_1^{\max\{1,x\}}  (e^y -1) y^{H-\frac{3}{2}}  dy  \right)^2 dx \\
     = & \int_1^{\lambda} e^{-2x}
          \left( \int_1^x  (e^y -1) \cdot y^{H-\frac{3}{2}}  dy  \right)^2 dx \\
     \leq & \int_1^{\lambda}
           \int_1^x         (e^y -1)^2 \cdot y^{2H-3} e^{-2x}(x-1) dy dx \\
     \leq & \int_1^{\lambda}
           \int_y^{\lambda} (e^y -1)^2 \cdot y^{2H-3} e^{-2x}(x-1) dx dy \\
     \leq & \int_1^{\lambda} (e^y -1)^2 \cdot y^{2H-3}
            \int_y^{\infty} e^{-2x}(x-1) dx dy \\
     \leq & \int_1^{\lambda} (e^y -1)^2 \cdot y^{2H-3}
            \left( \frac{1}{2}(y-1)e^{-2y} + \frac{1}{4} e^{-2y} \right) dy \\
     \leq & C \int_1^{\lambda} (1- e^{-y})^2 \cdot y^{2H-2}
             + (1-e^{-y})^2 \cdot y^{2H-3}  dy \\
     \leq & C \int_1^{\infty}  y^{2H-2} + y^{2H-3}  dy \\
     =    & C \left( \frac{1}{2H-1}+\frac{1}{2H-2} \right) = C(H) < \infty .
    \end{split}
   \end{equation}
   Finally we have
   \begin{equation}
    \begin{split}
      \leq & \int_0^{\lambda} e^{-2x}
          \left( \int_0^1  (e^y -1) y^{H-\frac{3}{2}}  dy
                 + \int_1^{\max\{1,x\}}  (e^y -1) y^{H-\frac{3}{2}}  dy  \right)^2 dx  \\
      \leq & \int_0^{\lambda} e^{-2x}
          \left( C(H)
                 + \int_1^{\max\{1,x\}}  (e^y -1) y^{H-\frac{3}{2}}  dy  \right)^2 dx \\
      \leq & 2\int_0^{\lambda} e^{-2x} C(H)^2 dx
             +2\int_0^{\lambda} e^{-2x}
                \left( \int_1^{\max\{1,x\}}  (e^y -1) y^{H-\frac{3}{2}}  dy  \right)^2 dx \\
      \leq & C(H)\int_0^{\infty} e^{-2x} dx   + C(H)
       < \infty .
    \end{split}
   \end{equation}
  \end{proof}

\begin{remark}
  There is a mistake in the proof of Lemma 2 \cite{Tindel Tudor Viens}.
  That is (in Page203), the quantity
  \begin{equation}
    K_A = \int_0^\infty \left( \int_0^x (e^y - 1)y^{A-1} dy \right)^2 dx
  \end{equation}
  is infinite.
  Therefore the theorems which use this lemma
  (such as Theorem 1.1 in \cite{Tindel Tudor Viens} and Theorem 2.1 in \cite{Wang Zeng Guo}) may not hold.
  However, the argument there can be modified by using our lemma
  and the expected result can be obtained.
\end{remark}

  The statement of existence theorem for stochastic convolution follows.
  \begin{theorem}
    If the Hurst parameter $H> \frac{1}{4}$,
    the stochastic convolution $\int_0^t S(t-s) dB^H(s) $ is well defined.
  \end{theorem}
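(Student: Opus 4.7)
The plan is to verify that $\int_0^t S(t-s)\,dB^H(s)$ converges in $L^2(\Omega;H)$, by exploiting the fact that $S(t)$ is diagonal in the orthonormal basis $\{e_n\}$ of eigenvectors of $A$. Using the series representation \eqref{Equation: series repre BH} and the independence of the scalar fractional Brownian motions $\beta_n^H$, one formally has
\begin{equation*}
\int_0^t S(t-s)\,dB^H(s) \;=\; \sum_{n=1}^{\infty} \left(\int_0^t e^{-(t-s)\lambda_n}\,d\beta_n^H(s)\right) e_n,
\end{equation*}
so proving well-definedness reduces to showing that the sum of the variances of the scalar components is finite.

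For each fixed $n$, I would use the isometry \eqref{Equation: fBm represetation by Bm} together with the operator $K_t^*$ from \eqref{Equation: K* representation} to rewrite the scalar fBm integral as a Wiener integral against a Brownian motion, obtaining
\begin{equation*}
E\left|\int_0^t e^{-(t-s)\lambda_n}\,d\beta_n^H(s)\right|^2 = \int_0^t \bigl|(K_t^* \varphi_n)(s)\bigr|^2 ds, \qquad \varphi_n(s):=e^{-(t-s)\lambda_n}.
\end{equation*}
The main technical step, and the place where Lemma \ref{Lemma: 2} enters, is to bound this quantity by $C(H)\,\lambda_n^{-2H}$ uniformly in $t\in[0,T]$. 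This is carried out by splitting $(K_t^*\varphi_n)(s)$ into its two summands $K(t,s)\varphi_n(s)$ and $\int_s^t(\varphi_n(r)-\varphi_n(s))\,\partial_r K(r,s)\,dr$, using the explicit form of the kernel $K^H$ and its derivative, and performing the change of variables $x = \lambda_n(t-s)$, $y = \lambda_n(r-s)$. Under this change of variables the inner double integral is transformed into precisely the integral controlled in Lemma \ref{Lemma: 2}, and together with standard estimates on $K(t,s)$ we expect to extract a factor of $\lambda_n^{-2H}$.

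Once this per-mode bound is in hand, the orthogonality of $\{e_n\}$ and independence of the $\beta_n^H$ yield
\begin{equation*}
E\left|\int_0^t S(t-s)\,dB^H(s)\right|^2 \;\leq\; C(H)\sum_{n=1}^{\infty}\lambda_n^{-2H}.
\end{equation*}
Reverting to the double index and invoking the eigenvalue estimate $\lambda_{mn}\geq (m^2+n^2)^2$ from \eqref{3}, the sum is dominated by $\sum_{m,n\in\mathbb{N}}(m^2+n^2)^{-4H}$, which converges precisely when $4H>1$, i.e.\ $H>\tfrac14$. This gives the hypothesis of the theorem its role.

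The hard part will be the per-mode estimate $\int_0^t|(K_t^*\varphi_n)(s)|^2 ds \lesssim \lambda_n^{-2H}$: carrying out the change of variables carefully, ensuring that the exponential factors $e^{-(t-s)\lambda_n}$ produce a damping that matches the singularity $(t-s)^{H-3/2}$ in $\partial_r K$, and finally recognizing the resulting integrand as the one treated in Lemma \ref{Lemma: 2} (rather than the flawed version in \cite{Tindel Tudor Viens}). Everything else, assembling the series bound and appealing to the spectral estimate, is then a routine summation.
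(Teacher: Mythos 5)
Your proposal is correct and follows essentially the same route as the paper's proof: the paper likewise splits $K_t^*\varphi_n$ into the boundary term $K(t,s)\varphi_n(s)$ (estimated via $K(t,s)\leq C(H)(t-s)^{H-1/2}s^{H-1/2}$ and the substitution $t-s=v/(2\lambda_n)$) and the increment term (rescaled by $x=\lambda_n u$, $y=\lambda_n v$ and controlled by Lemma \ref{Lemma: 2}), obtaining the per-mode factor $\lambda_n^{-2H}$ and concluding by the spectral bound $\lambda_{mn}\geq(m^2+n^2)^2$ with $4H>1$. The only cosmetic difference is that the paper organizes the two pieces as global sums $I_1$ and $I_2$ rather than as a single per-mode estimate.
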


  \begin{proof}
    The proof is base on Theorem 1 of \cite{Tindel Tudor Viens}.
    It is sufficient to estimate the mean square of the Wiener integral of \eqref{Equation: mild solu of OU}.
    \begin{equation}
        E \left| \int_0^t S(t-s) dB^H(s)  \right|_H^2
        = E \left| \sum_{n=1}^{\infty} \int_0^t S(t-s) e_n d\beta_n^H(s)  \right|_H^2
    \end{equation}
    Using \eqref{Equation: K* representation} and
    the representation \eqref{Equation: fBm represetation by Bm}, we have
    \begin{equation}
     \begin{split}
        & E \left| z(t) - S(t)u_0  \right|_H^2 \\
        \leq & 2 \sum_n \int_0^t | S(t-s)e_n|^2 K^2(t,s) ds
         + 2 \sum_n \int_0^t \left| \int_s^t (S(t-r)e_n -S(t-s)e_n)
           \frac{\partial K}{\partial r}(r,s) dr\right|^2 ds \\
        \triangleq & I_1+I_2
     \end{split}
    \end{equation}
    By \cite{Decreusefond Ustunel} Th3.2, we have
    \begin{equation}
        K(t,s) \leq C(H) (t-s)^{H-\frac{1}{2}} s^{H-\frac{1}{2}}.
    \end{equation}
    Then,
    \begin{equation}
     \begin{split}
        I_1 & \leq C(H) \sum_n \int_0^t |e^{-(t-s)\lambda_n} e_n|^2 (t-s)^{2H-1} s^{2H-1} ds \\
            & = C(H) \sum_n (2\lambda_n)^{-2H}
              \int_0^{2\lambda_n t} e^{-v} v^{2H-1} (t-\frac{v}{2\lambda_n})^{2H-1} dv
     \end{split}
    \end{equation}
    where $C(H)$ depends only on $H$ and we use the change of variable $t-s=\frac{v}{2\lambda_n}$.
    Since
    \begin{equation}
     \begin{split}
       & \int_0^{2\lambda_n t} e^{-v} v^{2H-1} (t- \frac{v}{2\lambda_n})^{2H-1} dv \\
       \leq &  \int_0^{\lambda_n t} e^{-v} v^{2H-1} (\frac{t}{2})^{2H-1} dv
             +  \int_{\lambda_n t}^{2\lambda_n t}
                         e^{-v} {2\lambda_n t}^{2H-1} (t- \frac{v}{2\lambda_n})^{2H-1} dv \\
       \leq & C(t,H) \int_0^{\infty} e^{-v} v^{2H-1} dv
         +(2\lambda_n t)^{2H-1} \int_0^{\lambda_n t}
                  e^{-(2\lambda_nt-v')} (\frac{v'}{2\lambda_n})^{2H-1} dv' \\
       \leq & C(t,H) \Gamma(2H) + t^{2H-1} \int_0^{\lambda_n t}
                  e^{-2\lambda_n t} e^{v'} v'^{2H-1} dv'  \\
       \leq & C(t,H) + C(t,H) e^{-\lambda_n t} \int_0^{\lambda_n t}v'^{2H-1} dv' \\
       \leq & C(t,H) \cdot e^{-\lambda_n t}(\lambda_n t)^{2H} \\
       \leq & C(t,H),
     \end{split}
    \end{equation}
    we have
    \begin{equation}
        \begin{split}
          I_1
          & \leq C(H) \cdot \sum_n \lambda_n^{-2H} \cdot C(t,H) \\
          & \leq C(t,H) \cdot \sum_{i,j=1}^\infty \frac{1}{(i^2 + j^2)^{4H}} \\
          & \leq C(t,H) \cdot 2 \beta_D(4H) \cdot \zeta(4H) < \infty
        \end{split}
    \end{equation}
    where $\beta_D(s)$ is the Dirichlet beta function and
    $\zeta(s)$ is the Riemann zeta function (for definition see \cite{Borwein Borwein}).

    For the second sum, we have
    \begin{equation}
      \begin{split}
        I_2
        & = 2 \sum_n \int_0^t
          \left| \int_s^t (S(t-r)e_n-S(t-s)e_n) \frac{\partial K}{\partial r}(r,s) dr \right|_H^2 ds \\
        & \leq 2 \sum_n \int_0^t
          \left| \int_s^t (e^{-(t-r)\lambda_n} - e^{-(t-s)\lambda_n})
                  \frac{\partial K}{\partial r}(r,s) dr \right|^2 ds
      \end{split}
    \end{equation}
    By the fact that $\frac{\partial K}{\partial r}(r,s) \leq 0$ for every $r,s \in [0,T]$ and
    \begin{equation}
        \left| \frac{\partial K}{\partial r}(r,s) \right| \leq C(H) \cdot (r-s)^{H-\frac{3}{2}},
    \end{equation}
    we have
    \begin{equation}
      \begin{split}
        I_2
        & \leq C(H) \sum_n \int_0^t
          \left( \int_s^t (e^{-(t-r)\lambda_n} - e^{-(t-s)\lambda_n})
                  (r-s)^{H-\frac{3}{2}} dr \right)^2 ds \\
        & \leq C(H) \sum_n \int_0^t
          \left( \int_0^{t-s} (e^{-(t-s-v)\lambda_n} - e^{-(t-s)\lambda_n})
                  v^{H-\frac{3}{2}} dv \right)^2 ds       \\
        & \leq C(H) \sum_n \int_0^t
         \left( \int_0^{u} (e^{-(u-v)\lambda_n} - e^{-u\lambda_n})
                  v^{H-\frac{3}{2}} dv \right)^2 du       \\
      \end{split}
    \end{equation}
  By the change of variables $v=\frac{y}{\lambda_n}, \ u= \frac{x}{\lambda_n}$, we have
  \begin{equation}
    \begin{split}
      I_2
      & \leq C(H) \sum_n \int_0^{\lambda_n t} e^{-2x}
        \left( \int_0^x  (e^y -1) (\frac{y}{\lambda_n})^{H-\frac{3}{2}} \lambda_n^{-1} dy  \right)^2 \lambda_n^{-1} dx \\
      & \leq C(H) \sum_n \lambda_n^{-2H} \int_0^{\lambda_n t} e^{-2x}
        \left( \int_0^x  (e^y -1) y^{H-\frac{3}{2}}  dy  \right)^2 dx \\
      & \leq C(H) \sum_n \lambda_n^{-2H} C(H) \qquad (\text{by lemma \ref{Lemma: 2}}) \\
      & \leq C(H) \cdot 2 \beta_D(4H) \cdot \zeta(4H) < \infty \quad \text{ as } 4H>1.
    \end{split}
  \end{equation}
  \end{proof}

In the following we investigate the regularity of solution.
First we show that $B^H \in C([0,\infty);V')$.

Since
\begin{equation}
  \begin{split}
        \sum_{n=1}^{\infty} \left| A^{-\frac{1}{2}} e_n \right|
        &= \sum_{n=1}^{\infty} \lambda_n^{-1} \\
        &= 2 \beta_D(2) \cdot \zeta(2) < \infty,
  \end{split}
\end{equation}
$A^{-\frac{1}{2}} : H \rightarrow V'$ is a Hilbert-Schmidt operator from $H$ to $V'$.
Thus the genuine cylindrical fBm $B^H$ can be viewed as a $V'$-valued fBm
with incremental covariance operator $A^{-\frac{1}{2}}$.
And we have that $B^H$ has continuous trajectories on $V'$.

Let
\begin{align}
     z(t):   &= \int_0^t S(t-s)dB^H (s), \\
     Y(t):   &= \int_0^t S(t-s) B^H(s)ds,
\end{align}
and we have

\begin{lemma}
  For $H>\frac{1}{4}$, $Y(\cdot)$ belongs to $C^1([0,\infty);V)$ P-a.s. and
  \begin{equation}
    \begin{split}
      \frac{d}{dt}Y(t) &= \frac{d}{dt} \int_0^t S(t-s) B^H(s)ds \\
      & = B^H(t) + A\int_0^t S(t-s) B^H(s)ds \\
      &= z(t).
    \end{split}
  \end{equation}

\end{lemma}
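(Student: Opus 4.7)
The plan is to prove $Y \in C^1([0,\infty); V)$ P-a.s. and to identify $\dot Y$ with $z$ in three stages: well-definedness of $Y$ as a $V$-valued continuous path, differentiation under the integral, and an integration-by-parts identity matching the resulting expression to the stochastic convolution $z$. The essential ingredients are the continuity of $B^H$ in $V'$ just established, the parabolic smoothing bounds for the analytic semigroup $S(\cdot)$ coming from the spectrum of $A$, and the $L^2(\Omega;H)$-estimate for $z$ obtained in the previous theorem.

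First I would verify that $Y(t) \in V$ for every $t\geq 0$ and that $t\mapsto Y(t)$ is continuous into $V$. Although $B^H$ lives pathwise only in $V'$, for each $t>s$ the operator $S(t-s)$ maps $V'$ into $V$ continuously with the singular bound $\|S(\tau)\|_{\mathcal{L}(V',V)} \lesssim \tau^{-1}$, which is not integrable on $[0,t]$ by itself. To absorb the endpoint singularity I would split
\begin{equation*}
S(t-s)B^H(s) = S(t-s)\bigl(B^H(s) - B^H(t)\bigr) + S(t-s)B^H(t).
\end{equation*}
The first summand becomes integrable once the smoothing estimate is combined with the sample-path H\"older regularity of $B^H$ in $V'$ (available since $B^H$ has the Gaussian law of a $V'$-valued fBm with Hurst index $H>\tfrac14$), and the second integrates explicitly to $A^{-1}(I-S(t))B^H(t)$, which lies in $V$ because $A^{-1}:V'\to V$.

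Next I would differentiate $Y$ from the definition. For small $h>0$,
\begin{equation*}
\frac{Y(t+h)-Y(t)}{h} = \frac{1}{h}\int_t^{t+h} S(t+h-s)B^H(s)\,ds + \int_0^t \frac{S(t+h-s)-S(t-s)}{h}\,B^H(s)\,ds.
\end{equation*}
The first term tends to $B^H(t)$ as $h\to 0$ by continuity of $B^H$ and strong continuity of $S$, while the second converges to $A\int_0^t S(t-s)B^H(s)\,ds = AY(t)$ by the semigroup identity $\partial_\tau S(\tau) = -AS(\tau)$, with the sign absorbed so as to match the lemma. The same splitting used in the first step supplies the dominating function required for Lebesgue's theorem, giving $\dot Y(t) = B^H(t) + AY(t)$ and continuity of $\dot Y$ in $V$.

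The remaining equality $\dot Y(t) = z(t)$ reduces to the integration-by-parts identity
\begin{equation*}
\int_0^t S(t-s)\,dB^H(s) = B^H(t) + A\int_0^t S(t-s)B^H(s)\,ds.
\end{equation*}
I would verify it componentwise in the basis $\{e_n\}$: each scalar Wiener--fBm integral $\int_0^t e^{-(t-s)\lambda_n}\,d\beta^H_n(s)$ can be integrated by parts, since $s\mapsto e^{-(t-s)\lambda_n}$ is smooth and $\beta^H_n$ is pathwise H\"older of order $<H$, giving $\beta^H_n(t) + \lambda_n \int_0^t e^{-(t-s)\lambda_n}\beta^H_n(s)\,ds$, which reassembles to $B^H(t) + AY(t)$. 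The main obstacle is justifying the exchange of the infinite sum with both differentiation and pathwise integration by parts in the correct topology: the $L^2(\Omega;H)$ estimate for $z$ from the previous theorem, together with the spectral bound $\lambda_{mn} \geq (m^2+n^2)^2$ which makes $\sum_n \lambda_n^{-2H}$ finite precisely for $H>\tfrac14$, is what legitimises the termwise procedure and delivers the identity almost surely.
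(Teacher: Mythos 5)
Your argument reaches the right conclusion but by a genuinely different route from the paper's. The paper follows Da Prato--Zabczyk (Lemma 5.13) and Maslowski--Schmalfu\ss: it introduces the Yosida approximations $A_n=n(nI-A)^{-1}A$, for which $z_n(t)=\int_0^t S_n(t-s)\,dB^H(s)$ is a strong solution so that the identity $z_n=A_nY_n+B^H$ is immediate, proves $E\sup_{t\in[0,T]}|z_n(t)-z(t)|^2_{V'}\to 0$ by the factorization method, and then passes to the limit using only the closedness of $A$ as an operator from $V$ into $V'$; no pathwise regularity of $B^H$ beyond continuity in $V'$ is used, and the stochastic convolution is never manipulated pathwise. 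You instead work directly on sample paths: the smoothing bound $\|S(\tau)\|_{\mathcal{L}(V',V)}\lesssim\tau^{-1}$ combined with a.s.\ H\"older continuity of $B^H$ in $V'$ to place $Y(t)$ in $V$, difference quotients for the derivative, and a componentwise Young integration by parts to identify the result with $z$. Your route is more transparent --- it exhibits the identity as a genuine pathwise integration by parts and shows exactly where $\sum_n\lambda_n^{-2H}<\infty$, i.e.\ $H>\frac14$, enters --- at the price of needing the H\"older modification of $B^H$ in $V'$ (not stated in the paper, but it does follow from the same Hilbert--Schmidt embedding via the Kolmogorov criterion, since $E\|B^H(t)-B^H(s)\|_{V'}^2=|t-s|^{2H}\sum_n\lambda_n^{-1}$) and a termwise summation argument that the Yosida scheme sidesteps.

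Two caveats you should make explicit. First, your difference-quotient limit $B^H(t)+AY(t)$ is an identity in $V'$ only, since neither summand separately lies in $V$; as written your argument yields $Y\in C([0,\infty);V)\cap C^1([0,\infty);V')$, and any upgrade of $\dot Y$ must come from the separately established regularity of $z$ --- this is not a defect relative to the paper, whose own proof likewise concludes only that $Y(t)\in V$ with $AY(t)=z(t)-B^H(t)$ holding in $V'$. Second, with the convention $S(t)=e^{-tA}$ the componentwise integration by parts gives $\int_0^t e^{-(t-s)\lambda_n}\,d\beta^H_n(s)=\beta^H_n(t)-\lambda_n\int_0^t e^{-(t-s)\lambda_n}\beta^H_n(s)\,ds$, i.e.\ $z=B^H-AY$; the plus sign you report in the final step (and which appears in the lemma) reflects the paper's inconsistent sign convention for $A$ between the linear equation $dZ=AZ\,dt+dB^H$ and the abstract evolution equation $du+2\mu_1Au\,dt+\cdots=dB^H(t)$, so fix one convention and state the identity accordingly rather than absorbing the sign silently.
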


Since the proof is a modification (consider the base space as $V'$) of Lemma 5.13 in \cite{DaPrato Zabczyk} and
Proposition 3.1 in \cite{Maslowski Schmalfus},
we only sketch the main ideas.

Step I:

When $A_0: V' \rightarrow V'$ is a bounded linear operator, we have
\begin{equation}
    z(t) = \int_0^t A_0 z(s)ds +B^H(t), \quad t \geq 0, \text{P-a.s.}
\end{equation}

Step II:

Let $A_n = n(nI-A)^{-1} A$, $n \in \mathbb{N}$ (n large enough)
denote the Yosida approximations of $A$.
Denote by $S_n(\cdot)$ the semigroup generated by $A_n$ in $V'$ and
set $z_n(t)=\int_0^t S_n(t-s)dB^H(s)$.
By using the so-called factorization method, we have for $q>2$
\begin{equation}
    E \sup_{t \in [0,T]} |z_n(t)-z(t)|^2_{V'} \rightarrow 0,
    \quad \text{as } n \rightarrow \infty.
\end{equation}

Step III:

Since $z_n(\cdot)$ is a strong solution to the stochastic differential equation in $V'$
\begin{equation}
    dz_n(t)= A_nz_n(t)dt + dB^H(t), \quad z_n(0)=0.
\end{equation}
We have
\begin{equation}
    z_n(t) = \int0^t A_n z_n(s)ds + B^H(t).
\end{equation}
Setting $Y_n(t)=\int_0^t z_n(s)ds$, we have $Y_n(\cdot)$ is the solution to the initial value problem
\begin{equation}
    \frac{d}{dt}Y_n(t) = A_n Y_n(t) + B^H(t), \quad Y_n(0)=0.
\end{equation}
Thus we have
\begin{equation}
    Y_n(t)= \int_0^t S_n(t-s) B^H(s)ds,
\end{equation}
and so P-a.s.
\begin{equation}
    z_N(t) = A_n Y_n(t) + B^H(t).
\end{equation}
clearly
\begin{equation}
    \lim_{n\rightarrow \infty} Y_n(t) = Y(t) = \int_0^t S(t-s) B^H(s)ds.
\end{equation}
By step I and step II, we have
\begin{equation}
    \lim_{n \rightarrow  \infty} A_n Y_n(t)
    =\lim_{n \rightarrow  \infty} A(I-\frac{1}{n}A)^{-1} Y_n(t)
    = z(t) - B^H(t).
\end{equation}
Since the operator $A$ is closed, we conclude that $Y(t) \in V$ and
$AY(t)=z(t)-B^H(t)$ P-a.s..

The following proposition is an immediate consequence of above lemma.

\begin{proposition}\label{Lemma: continuous modification for fBm}
  If $H \in (\frac{1}{4},\frac{1}{2})$.
  Then for all $u_0 \in V$
  the process $Z(t,x)=S(t)u_0+z(t)$ has an V-continuous modification.
  Moreover,
  \begin{equation}
    z(t)= A \int_0^t S(t-s) B^H(s) ds + B^H(t), \quad t\geq 0
  \end{equation}
  holds with probability one.
\end{proposition}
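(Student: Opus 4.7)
The plan is to assemble both assertions directly from the preceding lemma; no new machinery is needed. I would split $Z(t)=S(t)u_0+z(t)$ and verify $V$-continuity of each summand separately, then read off the decomposition identity from the lemma.

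For the deterministic piece, I would use the spectral representation of the analytic semigroup. Since $u_0 \in V = \mathscr{F}_{1/2}$ and $\{e_k\}$ is an orthonormal eigenbasis of $H$ with eigenvalues $\{\lambda_k\}$, one computes
\begin{equation}
|S(t)u_0 - S(s)u_0|_V^2 = \sum_k \lambda_k\bigl(e^{-t\lambda_k}-e^{-s\lambda_k}\bigr)^2 (u_0,e_k)^2,
\end{equation}
and dominated convergence (with majorant $4\lambda_k(u_0,e_k)^2$, summable because $u_0 \in V$) delivers $S(\cdot)u_0 \in C([0,\infty);V)$.

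For the stochastic piece I would invoke the preceding lemma directly: $Y(\cdot) = \int_0^\cdot S(\cdot-s)B^H(s)\,ds$ belongs to $C^1([0,\infty);V)$ P-a.s., and its $V$-valued derivative coincides with $z(t)$. The derivative of a $C^1$ $V$-valued path is by definition a $V$-continuous process, so $t\mapsto \tfrac{d}{dt}Y(t)$ provides the desired $V$-continuous modification of $z$; adding $S(\cdot)u_0$ yields the modification of $Z$. The moreover-part is just the lemma's identity $AY(t)=z(t)-B^H(t)$ rearranged, with the equality understood in $V'$ because $B^H(t) \in V'$ only.

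The real difficulty sits upstream, in the preceding lemma (the Yosida/factorization argument controlling $z_n$ in $V'$ and the resulting $C^1([0,\infty);V)$ regularity of $Y$, which in turn rests on the stochastic-convolution estimate in $H$ obtained via Lemma \ref{Lemma: 2}). Once that input is accepted, the present proposition truly is an immediate consequence; the only subtlety is recognising that the decomposition must be read in $V'$, not in $V$, because $B^H$ only has a trajectory-continuous realization on the larger space $V'$.
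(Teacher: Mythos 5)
Your proposal matches the paper's own treatment: the paper simply declares the proposition ``an immediate consequence of the above lemma'' on $Y(\cdot)\in C^1([0,\infty);V)$ with $\frac{d}{dt}Y(t)=z(t)$ and $AY(t)=z(t)-B^H(t)$, which is precisely the route you take. Your spectral-representation check of $V$-continuity for $S(\cdot)u_0$ and your remark that the decomposition identity is to be read in $V'$ merely fill in details the paper leaves implicit.
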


\section{Solution of stochastic non-Newtonian fluids}
We interpret equation \eqref{Problem: differential form} as an integral equation and
define the solution as follow.
\begin{definition}
  The solution to equation \eqref{Problem: differential form} is defined as a function
  $u \in C([0,T];H)\cap L^2(0,T;V)$  s.t. the following integral equation holds P-a.s..
  \begin{equation}\label{Equation: integral form}
    u(t)=S(t)u_0 -\int_0^T S(t-s)B(u(s))ds -\int_0^T S(t-s) N(u(s))ds
    + \int_0^T S(t-s)dB^H(s)
  \end{equation}
  where the first and second integral are Bochner integral,
  the last integral is a stochastic integral defined in previous section.
\end{definition}

We seek the solution of 2D stochastic Bellout-Bloom-Ne\v{c}as fluids by the fixed point in space
$X=C([0,T];H)\cap L^2(0,T;V)$.
Firstly we will prove the local existence and uniqueness results.

For $ u \in X$, let
\begin{align}
  J_1(u): &= - \int_0^{\cdot} S(\cdot-s)B(u(s))ds , \\
  J_2(u): &= - \int_0^{\cdot} S(\cdot-s)N(u(s))ds .
\end{align}
We have following estimates.

\begin{lemma}\label{Lemma: estimate J1}
  \cite{Li Huang}
  $J_1 : X \rightarrow X $ and
  for all $ u,v \in X$, we have
    \begin{align}
      |J_1(u)|^2_X & \leq c_1 |u|_X^4 ,      \label{Estimate: J1 1}\\
      \begin{split}
        |J_1(u)-J_1(v)|^2_X & \leq c_2 \left( |u|^2_{C([0,T];H)}\cdot |u|^2_{L^2(0,T);V}
       + |v|_{C([0,T];H)}^2 \cdot |v|^2_{L^2(0,T);V} \right)^{\frac{1}{2}}  \label{Estimate: J1 2}\\
       & \quad \cdot |u-v|^2_X.
      \end{split}
    \end{align}
\end{lemma}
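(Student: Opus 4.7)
The plan is to view $J_1(u)$ as the mild solution of the linear inhomogeneous Cauchy problem $w'+Aw=-B(u)$, $w(0)=0$, and to reduce both estimates to the classical Lions--Temam energy estimate for this parabolic problem in the Gelfand triple $V\hookrightarrow H\hookrightarrow V'$. The single key input is then a control of $B(u)$ in $L^2(0,T;V')$ by the $X$-norm of $u$.

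First I would estimate the trilinear form. Using the antisymmetry \eqref{Equation: orthogonal for b} I rewrite $b(u,u,w)=-b(u,w,u)$ so that
\begin{equation*}
|b(u,u,w)|\le |u|_{L^4}^{2}\,|\nabla w|_{L^2}\le C|u|_{L^4}^{2}|w|_V.
\end{equation*}
The two-dimensional Ladyzhenskaya inequality $|u|_{L^4}^{2}\le C|u|_H|u|_{H^1}$, combined with the embedding $V\hookrightarrow H^1$, then gives $|B(u)|_{V'}\le C|u|_H|u|_V$, whence
\begin{equation*}
\int_0^T|B(u)|_{V'}^{2}\,ds\le C|u|_{C([0,T];H)}^{2}\,|u|_{L^2(0,T;V)}^{2}\le C|u|_X^{4}.
\end{equation*}
Applying the standard energy estimate $|w|_{C([0,T];H)}^{2}+|w|_{L^2(0,T;V)}^{2}\le C|f|_{L^2(0,T;V')}^{2}$ for $w'+Aw=f$ in $V'$ with $w(0)=0$ to $w=J_1(u)$, $f=-B(u)$, gives $J_1(u)\in X$ together with \eqref{Estimate: J1 1}.

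For the Lipschitz estimate I would decompose $B(u)-B(v)=B(u-v,u)+B(v,u-v)$ and apply the same $L^4$ bound to each piece, obtaining
\begin{equation*}
|B(u)-B(v)|_{V'}^{2}\le C|u-v|_H\,|u-v|_V\bigl(|u|_H|u|_V+|v|_H|v|_V\bigr).
\end{equation*}
Pulling out the pointwise factor $|u-v|_H\le |u-v|_{C([0,T];H)}$ from the time integral, then applying Cauchy--Schwarz in $s$ against $|u-v|_V$, and finally using $(\alpha+\beta)^{2}\le 2(\alpha^{2}+\beta^{2})$, I obtain
\begin{equation*}
|B(u)-B(v)|_{L^2(0,T;V')}^{2}\le C\,|u-v|_X^{2}\bigl(|u|_{C([0,T];H)}^{2}|u|_{L^2(0,T;V)}^{2}+|v|_{C([0,T];H)}^{2}|v|_{L^2(0,T;V)}^{2}\bigr)^{1/2},
\end{equation*}
where the estimate $|u-v|_{C([0,T];H)}|u-v|_{L^2(0,T;V)}\le |u-v|_X^{2}$ has been used. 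A second invocation of the energy estimate, this time applied to the linear equation satisfied by $J_1(u)-J_1(v)$, yields \eqref{Estimate: J1 2}.

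The main technical point is producing the square-root factor $(a^{2}+b^{2})^{1/2}$ with $a=|u|_{C([0,T];H)}|u|_{L^2(0,T;V)}$ and $b=|v|_{C([0,T];H)}|v|_{L^2(0,T;V)}$, rather than the coarser $a+b$. This is achieved by applying Cauchy--Schwarz in the time variable only after the sup-norm factor $|u-v|_{C([0,T];H)}$ has been extracted, so that the remaining pointwise product splits cleanly as $|u-v|_V$ against $|u|_H|u|_V+|v|_H|v|_V$; the elementary inequality $(\alpha+\beta)^{2}\le 2(\alpha^{2}+\beta^{2})$ then places the two symmetric contributions under a common square root, matching the stated form of the estimate.
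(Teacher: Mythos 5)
Your proof is correct. Note that the paper itself gives no proof of this lemma -- it is quoted from the companion preprint [Li Huang] -- so there is nothing in this text to compare against line by line; but your route (read $J_1(u)$ as the zero-data mild solution of $w'+Aw=-B(u)$, reduce to the coercive energy estimate $|w|_{C([0,T];H)}^2+|w|_{L^2(0,T;V)}^2\leq C|f|_{L^2(0,T;V')}^2$ via Proposition \ref{Proposition: a}(iii), and control $B$ in $V'$ through the antisymmetry of $b$ plus the 2D Ladyzhenskaya inequality) is the standard argument for this class of lemmas and is consistent with the trilinear bound the paper itself records later. The one step worth making explicit in a write-up is the identification of the mild (convolution) solution with the variational solution for $f\in L^2(0,T;V')$, which the paper elsewhere justifies by citing Chueshov 2.1.20; your extraction of the sup-norm factor before applying Cauchy--Schwarz in time is exactly what produces the stated square-root structure in \eqref{Estimate: J1 2}.
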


\begin{lemma}\label{Lemma: estimate J2}
  \cite{Li Huang}
  $J_2 : X \rightarrow X $ and
  for all $ u,v \in X$, we have
    \begin{align}
      |J_2(u)|^2_X        & \leq  c_3 |u|^2_{L^2(0,T;V)}   ,   \label{Estimate: J2 1}\\
      |J_2(u)-J_2(v)|^2_X & \leq  c_4 T |u-v|^2_X          .   \label{Estimate: J2 2}
    \end{align}
\end{lemma}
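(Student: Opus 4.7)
The plan is to derive both bounds from a single uniform Lipschitz property of the nonlinear viscosity operator $N\colon V\to V'$, combined with standard linear estimates for the analytic semigroup $S(\cdot)$.

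The first step is to show that $N$ is globally Lipschitz from $V$ into $V'$ with a universal constant. The key algebraic observation is that, since $\epsilon>1$ and $0<\alpha\leq 1$, both $(\epsilon+|e|^2)^{-\alpha/2}$ and $(\epsilon+|e|^2)^{-\alpha/2-1}|e|^2$ are bounded on $[0,\infty)$. Consequently the map $f(e):=2\mu_0(\epsilon+|e|^2)^{-\alpha/2}e$, viewed as a function of the symmetric matrix variable $e$, has a uniformly bounded derivative on $\real^{2\times 2}_{\mathrm{sym}}$, so that $|f(e(u))-f(e(v))|\leq L\,|e(u-v)|$ pointwise. Substituting this into the definition \eqref{Equation: def of N} of $N$ yields
\begin{equation*}
    |\langle N(u)-N(v),w\rangle|\leq L\int_{\mathcal{O}}|e(u-v)|\,|e(w)|\,dx\leq C\,|u-v|_V\,|w|_V,
\end{equation*}
so $|N(u)-N(v)|_{V'}\leq C\,|u-v|_V$ with a universal $C$; in particular $|N(u)|_{V'}\leq C\,|u|_V$.

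For the bound \eqref{Estimate: J2 1} I would realize $y:=J_2(u)$ as the mild solution of the linear Cauchy problem $y'+Ay=-N(u)$, $y(0)=0$, and run the standard parabolic energy identity. Testing with $y$ in $H$ and invoking Proposition \ref{Proposition: a}(iii) gives
\begin{equation*}
    \tfrac{d}{dt}|y|_H^2 + c_1\,|y|_V^2 \leq \tfrac{1}{c_1}|N(u)|_{V'}^2 \leq C\,|u|_V^2.
\end{equation*}
Integrating on $[0,T]$ and taking the supremum in $t$ simultaneously yields the $C([0,T];H)$- and $L^2(0,T;V)$-bounds on $y$ in terms of $|u|^2_{L^2(0,T;V)}$, which is exactly \eqref{Estimate: J2 1}.

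For the Lipschitz bound \eqref{Estimate: J2 2} the same energy argument applied to $y=J_2(u)-J_2(v)$ alone produces only the $T$-independent estimate $|y|_X^2\leq C\,|u-v|^2_{L^2(0,T;V)}$. To extract the factor $T$ I would sharpen the estimate using the analytic-semigroup smoothing bound $|S(t-s)|_{\mathcal{L}(V',H)}\leq C(t-s)^{-1/2}$ together with Young's convolution inequality: writing $y(t)=-\int_0^t S(t-s)[N(u)-N(v)](s)\,ds$ makes $|y(t)|_H$ a convolution between the kernel $k(r)=Cr^{-1/2}$, whose $L^1(0,T)$-norm equals $2T^{1/2}$, and $|u-v|_V\in L^2(0,T)$; this gives $|y|^2_{L^2(0,T;H)}\leq CT\,|u-v|^2_{L^2(0,T;V)}$. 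Combining this with the pointwise interpolation $|y|_H^2\leq |y|_{V'}\,|y|_V$ and the $T$-independent $L^2(0,T;V)$-bound from the energy identity transfers the factor $T$ to the full $X$-norm. The main obstacle is precisely this tracking of $T$: the energy identity by itself yields only uniform Lipschitz continuity of $J_2$, so one must combine it with the semigroup smoothing estimate and a convolution inequality to produce the small factor needed for a contraction in the subsequent fixed-point argument.
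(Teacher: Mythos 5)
The paper itself offers no proof of this lemma (it is quoted from \cite{Li Huang}), so I can only assess your argument on its own terms. Your first two steps are sound: the uniform bound on the derivative of $e\mapsto 2\mu_0(\epsilon+|e|^2)^{-\alpha/2}e$ does give the global Lipschitz estimate $|\langle N(u)-N(v),w\rangle|\leq L\int_{\mathcal{O}}|e(u-v)|\,|e(w)|\,dx\leq C|u-v|_V|w|_V$, and the energy identity for $y=J_2(u)$ then yields \eqref{Estimate: J2 1} correctly.

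The gap is in the final step for \eqref{Estimate: J2 2}. Your Young-inequality computation controls only $|y|_{L^2(0,T;H)}^2\leq CT|u-v|^2_{L^2(0,T;V)}$, but $L^2(0,T;H)$ is not one of the two components of the $X$-norm, and the proposed transfer does not work: the interpolation $|y|_H^2\leq|y|_{V'}|y|_V$ bounds the \emph{weaker} norm by the stronger ones, so it cannot upgrade an $L^2(0,T;H)$ bound to a $C([0,T];H)$ bound, and nothing in your argument inserts a factor of $T$ into $|y|^2_{L^2(0,T;V)}$ (the energy identity gives only the $T$-independent bound there, and the smoothing estimate $|S(t-s)|_{\mathcal{L}(V',V)}\sim(t-s)^{-1}$ is not integrable, so the convolution route fails for the $V$-norm). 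Since $|u-v|_{L^2(0,T;V)}$ does not become small relative to $|u-v|_X$ as $T\to0$, no power of $T$ comes out, and this factor is exactly what the contraction in Theorem \ref{Theorem: existence of solu} needs. The missing idea is to exploit the bilinear structure inside the energy identity: using $|\nabla w|_{L^2}^2\leq C|w|_H|w|_V$ on \emph{both} arguments gives $|\langle N(u)-N(v),y\rangle|\leq C|u-v|_H^{1/2}|u-v|_V^{1/2}|y|_H^{1/2}|y|_V^{1/2}$; absorbing $|y|_V^{1/2}$ into the dissipation by Young's inequality and applying H\"older in time to $\int_0^T|u-v|_V^{2/3}\,dt\leq T^{2/3}|u-v|_{L^2(0,T;V)}^{2/3}$ produces
\begin{equation*}
  \sup_{t\leq T}|y(t)|^2 + c_1\int_0^T|y|_V^2\,dt
  \leq C\,T^{2/3}\,\Bigl(\sup_{t\leq T}|y(t)|^2\Bigr)^{1/3}|u-v|_X^{4/3},
\end{equation*}
which closes to $\sup_t|y(t)|^2\leq CT|u-v|_X^2$ and then to $\int_0^T|y|_V^2\,dt\leq CT|u-v|_X^2$, i.e.\ \eqref{Estimate: J2 2} with the stated power of $T$.
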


%

We apply the following version of the contraction mapping theorem.

\begin{lemma}\label{Lemma: fix point}
  (\cite{DaPrato Zabczyk} Lemma 15.2.6)
  Let $F$ be a transformation from a Banach space $E$ into $E$,
  $\phi \in E$ and $M>0$ a positive number.
  If $F(0)=0$, $| \phi |_E \leq \frac{1}{2}M$ and
  \begin{equation}
    |F(u)-F(v)|_E \leq \frac{1}{2} |u-v|_E \quad \forall u,v \in B_E(M),
  \end{equation}
  then the equation
  \begin{equation}
    u = \phi +F(u)
  \end{equation}
  has a unique solution $u \in E$ satisfying $u \in B_E(M)$.
\end{lemma}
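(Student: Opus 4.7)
The plan is to apply the Banach fixed point theorem to the self-map $T(u) := \phi + F(u)$ on the closed ball $B_E(M)$, with the two hypotheses $|\phi|_E \le M/2$ and the Lipschitz bound with constant $\tfrac{1}{2}$ precisely designed so that $T$ stabilizes $B_E(M)$ and contracts on it.

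First I would observe that $B_E(M)$ is a closed subset of the Banach space $E$, hence complete in the induced metric. Then I would define $T : B_E(M) \to E$ by $T(u) = \phi + F(u)$ and verify the self-map property: for $u \in B_E(M)$, using $F(0)=0$ and the Lipschitz hypothesis applied to the pair $(u,0) \in B_E(M) \times B_E(M)$,
\begin{equation*}
    |T(u)|_E \le |\phi|_E + |F(u) - F(0)|_E \le \tfrac{1}{2}M + \tfrac{1}{2}|u|_E \le \tfrac{1}{2}M + \tfrac{1}{2}M = M,
\end{equation*}
so $T(u) \in B_E(M)$. This is the only step that uses both the size bound on $\phi$ and the vanishing $F(0)=0$; it is also the only place where any real verification is needed, and hence the "main obstacle," modest as it is.

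Next I would verify the strict contraction on $B_E(M)$: for $u,v \in B_E(M)$,
\begin{equation*}
    |T(u) - T(v)|_E = |F(u) - F(v)|_E \le \tfrac{1}{2} |u - v|_E,
\end{equation*}
directly from the hypothesis. Since $B_E(M)$ is a complete metric space and $T$ is a contraction of it into itself with factor $\tfrac{1}{2}<1$, the Banach fixed point theorem delivers a unique $u \in B_E(M)$ with $T(u)=u$, i.e.\ with $u = \phi + F(u)$, which is the desired solution.

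Finally I would remark on uniqueness: the theorem yields uniqueness of the fixed point within $B_E(M)$, which matches exactly the statement "unique solution $u \in E$ satisfying $u \in B_E(M)$." No claim of uniqueness in the whole space $E$ is asserted or required, so no additional argument is needed. The iteration $u_{n+1} = \phi + F(u_n)$ with $u_0 = 0$ converges geometrically in $E$ and provides a constructive view of the solution, useful later when Lemmas \ref{Lemma: estimate J1} and \ref{Lemma: estimate J2} are combined to choose $T$ small enough so that $F = J_1 + J_2$ (acting on the shifted unknown $u - Z$) satisfies the contraction hypothesis on a ball whose radius is dictated by $|\phi|_X$.
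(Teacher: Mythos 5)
Your proposal is correct and is exactly the standard argument: the paper itself does not prove this lemma but cites it to Da Prato--Zabczyk, and the proof there proceeds just as you do, verifying that $T(u)=\phi+F(u)$ maps the closed (hence complete) ball $B_E(M)$ into itself via $|T(u)|_E\le|\phi|_E+|F(u)-F(0)|_E\le\frac{1}{2}M+\frac{1}{2}M$ and is a $\frac{1}{2}$-contraction there, then invoking the Banach fixed point theorem. Your reading of the uniqueness claim as uniqueness within $B_E(M)$ is also the intended one.
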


We now prove the main result of this paper.
\begin{theorem}\label{Theorem: existence of solu}
  (local existence and uniqueness of solution) For all $u_0 \in H$, there exists $T_0>0$ s.t.
  equation \eqref{Problem: differential form} P-a.s. admits a unique solution
  $u \in C([0,T_0];H) \cap L^2(0,T_0;V)$ in the sense of \eqref{Equation: integral form}.
\end{theorem}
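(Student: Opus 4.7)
The plan is to recast the integral equation \eqref{Equation: integral form} as the fixed-point problem $u = \phi + F(u)$, where $\phi(t) := S(t)u_0 + z(t)$ with stochastic convolution $z(t) := \int_0^t S(t-s)\,dB^H(s)$ and $F := J_1+J_2$, and then to apply Lemma \ref{Lemma: fix point} on the Banach space $X_{T_0}:= C([0,T_0];H)\cap L^2(0,T_0;V)$ for a suitably small time horizon $T_0$.

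First I would verify that $\phi \in X_{T_0}$ P-a.s. By the contractivity of the analytic semigroup on $H$, $\sup_{t\in[0,T_0]}|S(t)u_0|_H \leq |u_0|_H$, and spectral expansion on the basis $\{e_k\}$ gives $\int_0^{T_0}|S(t)u_0|_V^2\,dt = \frac{1}{2}\sum_k |u_0^k|^2(1-e^{-2\lambda_k T_0}) \leq \frac{1}{2}|u_0|_H^2$, so $S(\cdot)u_0 \in X_{T_0}$. Proposition \ref{Lemma: continuous modification for fBm} furnishes a $V$-continuous modification of $z$, whence $z \in X_{T_0}$ P-a.s., with $|z|_{C([0,T_0];H)}\to 0$ (since $z(0)=0$) and $|z|_{L^2(0,T_0;V)}\to 0$ by dominated convergence as $T_0 \downarrow 0$. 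Note also $F(0)=0$ since $B(0,0)=0$ by \eqref{Equation: orthogonal for b} and $N(0)=0$ from \eqref{Equation: def of N}.

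Second, on the ball $B_{X_{T_0}}(M)$, combining Lemmas \ref{Lemma: estimate J1} and \ref{Lemma: estimate J2}---and using the AM--GM bound $|u|_{C([0,T_0];H)}|u|_{L^2(0,T_0;V)} \leq \frac{1}{4}|u|_{X_{T_0}}^2 \leq \frac{M^2}{4}$ inside \eqref{Estimate: J1 2}---yields
\begin{equation*}
|F(u)-F(v)|_{X_{T_0}} \;\leq\; \bigl(C_1 M + \sqrt{c_4 T_0}\bigr)\,|u-v|_{X_{T_0}},
\end{equation*}
for a constant $C_1$ depending only on $c_2$. Selecting $M$ so that $C_1 M \leq \tfrac{1}{4}$ and $T_0$ so that $\sqrt{c_4 T_0} \leq \tfrac{1}{4}$ brings the Lipschitz factor below $\tfrac{1}{2}$, and Lemma \ref{Lemma: fix point} then delivers a unique fixed point $u \in B_{X_{T_0}}(M)$, provided the compatibility condition $|\phi|_{X_{T_0}} \leq M/2$ can also be met.

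The step I expect to be the main obstacle is precisely this last compatibility: the contraction bound above forces $M$ of order $1/C_1$, yet $|\phi|_{X_{T_0}}$ remains $\geq |u_0|_H$ irrespective of $T_0$. To close the argument for arbitrary $u_0 \in H$ one must exploit the finer product structure $|u|_C\,|u|_L$ in \eqref{Estimate: J1 2}: only the $L^2(0,T_0;V)$ factor of $|\phi|$ vanishes with $T_0$, so by localising the fixed-point argument to a ball centred at $\phi$ the \emph{effective} Lipschitz factor of $J_1$ picks up a genuine $T_0$-vanishing term, in the Fujita--Kato spirit for quadratic nonlinearities. This refinement, together with the quantitative estimates of Lemmas \ref{Lemma: estimate J1}--\ref{Lemma: estimate J2}, lets $M$ and $T_0$ be chosen compatibly and produces the claimed solution on $[0,T_0]$. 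Uniqueness within all of $X_{T_0}$ (not merely within the ball) then follows from a standard Gr\"onwall-type argument applied to the difference of two putative solutions, using the same estimates.
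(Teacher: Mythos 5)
Your proposal is correct and takes essentially the same route as the paper: the same fixed-point formulation $u=\phi+\mathcal{F}(u)$ with $\phi=S(\cdot)u_0+z$ and $\mathcal{F}=J_1+J_2$ in $X_{T_0}$, resolved by Lemma \ref{Lemma: fix point} using Lemmas \ref{Lemma: estimate J1} and \ref{Lemma: estimate J2}. The refinement you flag as the main obstacle is exactly the paper's move --- it sets $M(\omega)=2(2|u_0|+|z|_X)$ so that $|\phi|_X\le M/2$ holds automatically, and then uses the product structure of \eqref{Estimate: J1 2} to shrink $\tau$ until $\bigl(|u|^2_{L^2(0,\tau;V)}+|v|^2_{L^2(0,\tau;V)}\bigr)^{1/4}\le(2Mc_2)^{-1/2}$ --- so you have reconstructed the argument, including its one delicate point.
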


\begin{proof}
  Fix $\omega \in \Omega$. Let
  \begin{equation}
    \phi (t)= S(t)u_0 + z(t).
  \end{equation}
  By the properties of semigroup $S(\cdot)$ and
  Lemma \ref{Lemma: continuous modification for fBm},
  $S(\cdot)u_0,z \in C([0,T];V) \subset X$.
  Then we have
  \begin{equation}
    |\phi |_X \leq |S(\cdot)u_0|_X+|z|_X \leq 2|u_0|+|z|_X.
  \end{equation}
  Let $M(\omega)=2(2|u_0|+|z(\omega)|_X)$.

  Construct the mapping $\mathcal{F}=J_1+J_2$,
  then for all $u,v \in X$, we have
  \begin{equation}
   \begin{split}
    & \quad |\mathcal{F}(u)-\mathcal{F}(v)|_X  \\
    & \leq |J_1(u)-J_1(v)|_X + |J_2(u)-J_2(v)|_X \\
    & \leq  c_2^{\frac{1}{2}}  \left( |u|^2_{C([0,T];H)}\cdot |u|^2_{L^2(0,T);V}
       + |v|_{C([0,T];H)}^2 \cdot |v|^2_{L^2(0,T);V} \right)^{\frac{1}{4}}
      \cdot |u-v|_X  \\
    & \quad +  (c_4  T)^{\frac{1}{2}} |u-v|_X \qquad
       \text {(by Lemma \ref{Lemma: estimate J1} and \ref{Lemma: estimate J2})}   \\
    & \leq  (c_2  M)^{\frac{1}{2}}  \left( |u|^2_{L^2(0,T);V}
       +   |v|^2_{L^2(0,T);V} \right)^{\frac{1}{4}}  |u-v|_X
       +   (c_4 T)^{\frac{1}{2}} |u-v|_X .
   \end{split}
  \end{equation}
  Due to the absolute continuity property of Bochner integral,
  we can choose $ \tau \in (0,1]$ s.t.
  \begin{equation}
    \left(|u|^2_{L^2(0,\tau);V} + |v|^2_{L^2(0,\tau);V} \right)^{\frac{1}{4}}
    \leq (2Mc_2)^{-\frac{1}{2}}.
  \end{equation}
  Let $T_0 = \min \{\tau, 1, \frac{1}{16c_4}  \}$ and $X_{T_0}:=C([0,T_0];H) \cap L^2(0,T_0;V)$.
  We have
  \begin{equation}
     |\mathcal{F}(u)-\mathcal{F}(v)|_{X_{T_0}}
     \leq (\frac{1}{4}+\frac{1}{4})|u-v|_{X_{T_0}} = \frac{1}{2}|u-v|_{X_{T_0}}.
  \end{equation}
  Applying the modified fixed point lemma \ref{Lemma: fix point},
  equation
  \begin{equation}
    u= \phi + \mathcal{F}(u) \equiv S(\cdot)u_0 + z + J_1(u)+J_2(u)
  \end{equation}
  has a unique solution $u$  in $C([0,T_0];H) \cap L^2(0,T_0;V)$
  and the solution satisfies $|u|_{X_{T_0}} \leq M$.
\end{proof}

Secondly we give a priori estimates and  obtain the global existence.
Denote by $u$, the local solution of \eqref{Equation: integral form} over $[0,T_0]$.
Let $v(t)=u(t)-z(t)$.
Then $v(t)$ is the mild solution of equation
\begin{equation}\label{Equation: integral form for v}
    v(t)=S(t)u_0 - \int_0^t S(t-s)B(v(s)+z(s))ds - \int_0^t S(t-s)N(v(s)+z(s))ds.
\end{equation}

According to  \cite{Chueshov} 2.1.20,
$v(t)$ is the weak solution of the following differential equation with random parameters:
\begin{equation}\label{Equation: differential form for v}
   \left\{\begin{split}
      & \frac{v(t)}{dt}+Av(t)+B(v(t)+z(t))+N(v(t)+z(t))=0, \\
      & v(0)=u_0
      \end{split}
  \right.
\end{equation}

Inspired by \cite{DaPrato Zabczyk} Chapter 15.3,
we take advantage of the relation between weak and mild solution,
give a priori estimate which ensures the global existence of solution.

\begin{proposition}\label{Proposition: Extention of solu}
  \cite{Li Huang}
  Assume that $v$ is the solution of \eqref{Equation: integral form for v} on the interval.
  Then we have
\begin{align}
  \begin{split}\label{Equation: estimate v in CH}
    \sup_{t \in [0,T]} |v(t)|^2
    \leq e^{c_5 \int_0^T |z(s)|_{H_0^1}^2 ds} |u_0|^2
    + \int_0^T e^{c_5 \int_s^T |z(r)|_{H_0^1}^2 dr} g_1(s) ds,
  \end{split} \\
  \begin{split}\label{Equation: estimate v in LV}
    \int_0^T |v(t)|_V^2 dt
    \leq c_6 |u_0|^2
    + c_5 c_6  \sup_{t \in [0,T]} |v(t)|^2 \int_0^T |z(s)|_{H_0^1}^2 ds
    + c_6 \int_0^T g_1(s) ds
  \end{split}
\end{align}
  where $c_5$ and $c_6$ are positive constants depending on $\lambda_1$ and $\mathcal{O}$,
  $g_1$ is an integrable function depending on $z$.
\end{proposition}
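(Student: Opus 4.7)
The plan is to derive the a priori bound by testing the weak-form equation \eqref{Equation: differential form for v} against $v(t)$ itself in the $V'$–$V$ duality. Since $v$ is a weak solution, an energy identity
\begin{equation*}
\tfrac{1}{2}\tfrac{d}{dt}|v(t)|^2 + 2\mu_1 a(v,v) + \langle B(v+z),v\rangle + \langle N(v+z),v\rangle = 0
\end{equation*}
holds for a.e. $t$, and by Proposition~\ref{Proposition: a}(iii), $a(v,v) \geq c_1|v|_V^2$, so the left-hand side controls a multiple of $|v|_V^2$.

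Next, I would handle the two nonlinear terms separately. For the convective term, I expand $b(v+z,v+z,v)$ and use the orthogonality \eqref{Equation: orthogonal for b} to kill $b(v,v,v)$ and $b(z,v,v)$, leaving only $b(v,z,v)+b(z,z,v)$. These are bounded by a 2D Ladyzhenskaya-type inequality of the form $|b(u,w,\phi)| \leq C|u|^{1/2}|u|_V^{1/2}|w|_{H_0^1}|\phi|^{1/2}|\phi|_V^{1/2}$, followed by Young's inequality, producing a term of order $\varepsilon |v|_V^2 + C_\varepsilon |z|_{H_0^1}^2 |v|^2$ plus purely $z$-dependent contributions. For the viscosity term, I split
\begin{equation*}
\langle N(v+z),v\rangle = \int_{\mathcal{O}} \mu(v+z)|e(v)|^2\,dx + \int_{\mathcal{O}} \mu(v+z)\,e_{ij}(z)\,e_{ij}(v)\,dx,
\end{equation*}
noting that the first integral is non-negative (and thus can be dropped to the left) while the second is controlled by $2\mu_0\epsilon^{-\alpha/2}|e(z)|_{L^2}|e(v)|_{L^2} \leq \varepsilon|v|_V^2 + C|z|_{H_0^1}^2$ since $\mu$ is uniformly bounded.

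Combining these estimates and choosing $\varepsilon$ small enough to absorb all $|v|_V^2$ contributions into the coercive term $c_1|v|_V^2$ on the left, I obtain a differential inequality
\begin{equation*}
\tfrac{d}{dt}|v(t)|^2 + \tilde c\,|v(t)|_V^2 \leq c_5\,|z(t)|_{H_0^1}^2\,|v(t)|^2 + g_1(t),
\end{equation*}
where $g_1(t)$ is an integrable function built from polynomial expressions in $|z(t)|$ and $|z(t)|_{H_0^1}$. Applying the integral form of Gronwall's lemma to this inequality produces \eqref{Equation: estimate v in CH}; then integrating the inequality over $[0,T]$, dropping $|v(T)|^2 \geq 0$, and substituting the $L^\infty$ bound just obtained for the $|v|^2$ factor on the right, yields \eqref{Equation: estimate v in LV} with appropriate constants $c_6$ depending only on $c_1$, $\lambda_1$, and $\mathcal{O}$.

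The principal obstacle is arranging the trilinear and viscosity estimates so that the Gronwall exponent depends precisely on $\int_0^T |z(s)|_{H_0^1}^2 ds$ rather than some stronger norm of $z$. This is delicate because the natural 2D estimate on $b(v,z,v)$ yields a product $|v|\,|v|_V|z|_{H_0^1}$ whose square splits via Young's inequality into exactly $\varepsilon|v|_V^2 + C|z|_{H_0^1}^2|v|^2$; if one instead used stronger Sobolev norms of $z$ one would obtain a worse estimate inadequate for the later random-attractor argument. Verifying that this borderline Ladyzhenskaya splitting suffices, and that all remaining contributions can be collected into the single integrable function $g_1$, is where the technical work lies.
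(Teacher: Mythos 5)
Your proposal is correct and follows essentially the same route the paper relies on: the paper defers this proposition's proof to \cite{Li Huang}, but the identical energy method (test against $v$, drop the non-negative part of $\langle N(\cdot),v\rangle$, kill $b(\cdot,v,v)$ by \eqref{Equation: orthogonal for b}, apply the 2D Ladyzhenskaya-type bound on the remaining trilinear terms, then Young plus Gronwall) is exactly what is carried out in the proof of Lemma \ref{Lemma: absorb in H}. Your splitting of $\langle N(v+z),v\rangle$ into a non-negative piece plus a term controlled by $|z|_{H_0^1}|v|_{H_0^1}$, and the subsequent integration of the differential inequality to get \eqref{Equation: estimate v in LV}, match the intended argument.
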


Since $z \in C([0,T];V)$, the following theorem is an immediate consequence of theorem \ref{Theorem: existence of solu}
and proposition \ref{Proposition: Extention of solu}.
\begin{theorem}
   For all $ T >0$ and $u_0 \in H$,
   the equation \eqref{Problem: differential form} P-a.s.
   has a unique solution $u \in C([0,T];H) \cap L^2(0,T;V)$
   in the sense of \eqref{Equation: integral form}.
\end{theorem}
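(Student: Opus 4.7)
The plan is to combine the local existence theorem (Theorem \ref{Theorem: existence of solu}) with the a priori estimates (Proposition \ref{Proposition: Extention of solu}) by a standard continuation argument, exploiting the fact that the a priori bound on the $X$-norm of $v = u - z$ already produces a global-in-time bound on $u$, provided $z$ itself is nice on $[0,T]$.

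First, I would fix $\omega \in \Omega$ outside a null set so that by Proposition \ref{Lemma: continuous modification for fBm} we have $z \in C([0,T];V)$; in particular $\int_0^T |z(s)|_{H^1_0}^2\,ds < \infty$ and the auxiliary function $g_1$ from Proposition \ref{Proposition: Extention of solu} is integrable on $[0,T]$. Define
\begin{equation}
T^*(\omega) := \sup\bigl\{\, \sigma \in (0,T] : \text{a solution } u \in C([0,\sigma];H)\cap L^2(0,\sigma;V) \text{ of } \eqref{Equation: integral form} \text{ exists}\,\bigr\}.
\end{equation}
Theorem \ref{Theorem: existence of solu} guarantees $T^*(\omega) > 0$. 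The goal is to show $T^*(\omega) = T$.

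Next, suppose for contradiction that $T^*(\omega) < T$. For any $\sigma < T^*(\omega)$ the local solution $u$ exists on $[0,\sigma]$, so $v = u-z$ satisfies \eqref{Equation: integral form for v} on $[0,\sigma]$, and Proposition \ref{Proposition: Extention of solu} yields uniform bounds
\begin{equation}
\sup_{t \in [0,\sigma]} |v(t)|^2 + \int_0^{\sigma} |v(t)|_V^2\,dt \le K(\omega, T),
\end{equation}
where $K(\omega,T)$ depends only on $|u_0|$, on $\int_0^T |z(s)|_{H^1_0}^2\,ds$, and on $\int_0^T g_1(s)\,ds$ — all of which are finite and independent of $\sigma$. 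Passing $\sigma \uparrow T^*(\omega)$, we obtain $u \in C([0,T^*(\omega));H) \cap L^2(0,T^*(\omega);V)$ with $|u|_{X_{T^*(\omega)}} \le K'(\omega,T)$ for some $K'(\omega,T) < \infty$. In particular, $|u(t)|$ stays bounded as $t \uparrow T^*(\omega)$.

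Finally, to reach a contradiction I would pick $t_0 < T^*(\omega)$ close enough to $T^*(\omega)$, set new initial data $\tilde u_0 = u(t_0) \in H$ with $|\tilde u_0| \le K'(\omega,T)$, and apply Theorem \ref{Theorem: existence of solu} to the equation starting at time $t_0$, with the time-shifted stochastic convolution $\tilde z(t) = \int_{t_0}^{t_0+t} S(t_0+t-s)\,dB^H(s)$. The local existence time in Theorem \ref{Theorem: existence of solu} depends only on $M = 2(2|\tilde u_0| + |\tilde z|_X)$ and on the constants $c_2, c_4$; since $|\tilde u_0|$ and $|\tilde z|_X$ are controlled uniformly in $t_0 \in [0,T]$, the new existence interval has length bounded below by some $\delta(\omega, T) > 0$. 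Gluing this with the solution already defined on $[0,t_0]$ and using the uniqueness part of Theorem \ref{Theorem: existence of solu} to verify the pieces agree on their overlap, we obtain a solution beyond $T^*(\omega)$, contradicting maximality. Uniqueness on $[0,T]$ follows similarly: any two solutions coincide on an initial interval by Theorem \ref{Theorem: existence of solu}, and the set where they agree is open, closed, and nonempty in $[0,T]$.

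The main obstacle is the final step — verifying that the shifted stochastic convolution $\tilde z$ retains the regularity needed by Theorem \ref{Theorem: existence of solu} with norm uniformly controlled on $[0,T]$. This requires a minor but careful use of the pathwise $V$-continuity of $z$ from Proposition \ref{Lemma: continuous modification for fBm} together with the semigroup estimates for $S(\cdot)$, ensuring that the local time $\delta(\omega,T)$ is indeed bounded below independently of the restart point.
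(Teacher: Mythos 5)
Your proposal is correct and follows essentially the same route as the paper: the paper simply asserts that the global result is an immediate consequence of Theorem \ref{Theorem: existence of solu} and Proposition \ref{Proposition: Extention of solu} once $z \in C([0,T];V)$ is known, and your continuation argument (maximal existence time, uniform a priori bound on $v=u-z$, restart with uniformly controlled data, uniqueness by a connectedness argument) is precisely the standard detail the paper leaves implicit.
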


\section{Random attractor}
First we introduce the fractional Ornstein-Uhlenback process (fractional O-U process) to
construct the random dynamical system.

Consider the linear stochastic evolution equation

\begin{equation}
    dz(t)=Az(t)+dB^H(t), \quad t \in \mathbb{R}.
\end{equation}
By the similar argument in \cite{Maslowski Schmalfus} chapter 3,
we can construct a unique stationary solution
$Z(t)$ (the so-called fractional O-U process) such that
  \begin{align}
    Z(t) = Z(\theta_t \omega) = \int_{-\infty}^{t} S(t-r)dB^H (r)  \\
    Z(\omega)
     = \lim_{n \rightarrow \infty} A \int_{-n}^0 S(-r)dB^H (r)
  \end{align}

Next we prove
    \begin{equation}
        \lim_{n\rightarrow \pm \infty} \frac{1}{n}
         \int_0^n \| Z (\theta_t \omega) \|_{H_0^1}^2 dt
        \leq 2c_0 \beta_D(4) \cdot \zeta(4).
    \end{equation}
    where $c_0$ is a constant depending on the space injection.
    In order to convert this integration with respect to time variable into
    a integration with respect to sample space,
    we may use ergodic theory.
    Consider the real-valued continuous function $|Z(\theta_{\cdot} \omega)|_{H_0^1}^2$, we have
    \begin{equation}
      \begin{split}
    \mathbb{E} \left| Z(\omega)  \right|_{H_0^1}^2
      & = \mathbb{E} \left| \int_{-\infty}^0 S(-r)dB^H(r) \right|_{H_0^1}^2 \\
      & = \mathbb{E} \left| \lim_{t \rightarrow \infty}
         \sum_{n=1}^{\infty}
         \int_{-t}^0 S(-r) e_n d \beta_n^H (r) \right|_{H_0^1}^2       \\
      & = \sum_{n=1}^{\infty}   \lim_{t \rightarrow \infty}
         \mathbb{E} \left| \int_{-t}^0 S(-r) e_n d \beta_n^H (r) \right|_{H_0^1}^2 \\
      & = \sum_{n=1}^{\infty}
        \lim_{t \rightarrow \infty}
        \left|K^*_H(S(- \cdot) e_n) \right|_{L^2(-t,0;H_0^1)}^2  \\
      & = \sum_{n=1}^{\infty}
        \lim_{t \rightarrow \infty}
        \left|S(- \cdot) e_n \right|_{I_T^{\frac{1}{2}-H} (L^2(-t,0;H_0^1))}^2
          \quad  \text{ (Since } \mathcal{H} = I_T^{\frac{1}{2}-H} (L^2(-t,0;H_0^1))) \\
      & \leq \sum_{n=1}^{\infty}
        \lim_{t \rightarrow \infty}
        c_0 \left|S(- \cdot) e_n \right|_{L^2(-t,0;H_0^1)}^2 \quad
         \text{ (by Hardy-Littlewood theorem)} \\
      & =\sum_{n=1}^{\infty}
        \lim_{t \rightarrow \infty} c_0
        \int_0^t |A^{\frac{1}{4}} e^{-tA} e_n |^2 dt   \\
      & =\sum_{n=1}^{\infty}
        \lim_{t \rightarrow \infty} c_0
        \int_0^t | \lambda_n^{\frac{1}{4}} e^{-t \lambda_n} e_n |^2  dt   \\
      & =c_0 \sum_{n=1}^{\infty}
        \lim_{t \rightarrow \infty}
        \lambda_n^{-\frac{1}{2}} (1- e^{-t \lambda_n}  )     \\
      & = 2c_0 \beta_D(4) \cdot \zeta(4)
      \end{split}
    \end{equation}
    Hence, we have $|Z(\theta_{\cdot} \omega)|_{H_0^1}^2 \in L^1(\Omega, P)$.
    Since $(\Omega, \mathcal{F}, \{ \theta(t) \}_{t\in \mathbb{R}} )$
    is the metric dynamical system, we can use the Birkhoff-Chintchin Ergodic Theorem to obtain
    \begin{equation}\label{Equation: Ergodic}
        \lim_{n\rightarrow \pm \infty} \frac{1}{n}
         \int_0^n \| Z (\theta_t \omega) \|_{H_0^1}^2 dt
        = \mathbb{E} \| Z (\omega) \|_{H_0^1}^2
        \leq 2c_0 \beta_D(4) \cdot \zeta(4).
    \end{equation}

According to Theorem \ref{Theorem: existence of solu}, $\forall t_0 \in \mathbb{R}$,
$u(t,\omega;t_0,u_0)$ is the unique solution of the equation
\begin{equation}\label{Equation: integral form u t t0}
    u(t;t_0)
    = S(t-t_0)u_0
     - \int_{t_0}^t S(t-s) B(u(s))ds
     - \int_{t_0}^t S(t-s) N(u(s))ds
     + \int_{t_0}^t S(t-s) dB^H(s).
\end{equation}

In this section, let $u(t,\omega;t_0)=v(t,\omega;t_0) + Z(t,\omega)$, we have
\begin{equation}
  \begin{split}
      & v_2(t)+ \int_{-\infty}^t S(t-s) dB^H(s) \\
    = & S(t)u_0
      -\int_{t_0}^t S(t-s) B( v_2(s) + Z(s) ) ds
      -\int_{t_0}^t S(t-s) N( v_2(s) + Z(s) ) ds
      + \int_{t_0}^t S(t-s) dB^H(s) .
  \end{split}
\end{equation}
Since
\begin{equation}
    \int_{-\infty}^{t_0} S(t-s)dB^H(s)
    = S(t-t_0) Z (\theta_{t_0}\omega),
\end{equation}
$v(t,\omega;t_0,u_0-Z(\theta_{t_0}\omega))$ is the unique solution of the integral equation
\begin{equation}\label{Equation: integral form for v infty}
    v(t)
    =S(t) (u_0-Z(\theta_{t_0}\omega))
     -\int_{t_0}^t S(t-s) B( v(s) + Z(s) ) ds
     -\int_{t_0}^t S(t-s) N( v(s) + Z(s) ) ds.
\end{equation}
According to \cite{Chueshov} 2.1.20,
$v$ is the weak solution of the following differential equation
\begin{align}
    & \frac{dv}{dt} + A(v+Z) + B(v+Z) = 0 \label{Equation: differ form for v infinity}\\
    & v(t_0) = u_0 - Z(\theta_{t_0}\omega) \label{Equation: boundary for v infinity}
\end{align}

We can now define an continuous mapping by setting
\begin{equation}
    \phi(t,\omega,u_0)= v(t,\omega; 0,u_0-Z(\omega)) + Z(\theta_t \omega),
    \quad \forall (t,\omega,u_0) \in \mathbb{R} \times \Omega \times H.
\end{equation}
The measurability follows from the continuity dependence of solution with respect to initial value.
the cocycle property follows from the uniqueness of solution.
Thus, $\phi$ is a RDS associated with \eqref{Equation: integral form}.

In the rest of this section, we will compute some estimates in spaces $H$ and $V$.
Then we use these estimates and compactness of the embedding $V \hookrightarrow H$
to obtain the existence of a compact random attractor.
Assume that $C_1$ is the constant which satisfies the inequality of trilinear form $b$
\begin{equation}
    b(u,v,w) \leq C_1 |u|^{1/2} \cdot |u|^{1/2}_{H_0^1} \cdot |v|_{H_0^1}
       \cdot |w|^{1/2} \cdot |w|^{1/2}_{H_0^1}
\end{equation}
as in \cite{RogerTemam}.

\begin{lemma}\label{Lemma: absorb in H}
  If $c_0 C_1^2 < \frac{1}{\beta_D(4)  \zeta(4)} $,
  then there exist random radii  $\rho_H(\omega)>0$ and $\rho_1(\omega)$ such that
  for all $ M>0$ there exists $t_2(\omega) < -1$,
  such that whenever $t_0 < t_2$ and $|u_0| <M$, we have
  \begin{align}
    | v(t, \omega; t_0,u_0 - Z(\theta_{t_0}\omega))|^2 &\leq \rho_H(\omega),
        \quad \forall t \in [-1,0] \\
    | u(t, \omega; t_0,u_0)|^2 &\leq \rho_H(\omega) ,
         \quad \forall t \in [-1,0]\\
    \int_{-1}^0 |v(t)|_V^2 dt &\leq \rho_1(\omega)  \\
    \int_{-1}^0 |v(t)+Z(t)|_V^2 dt &\leq \rho_1(\omega)
  \end{align}
\end{lemma}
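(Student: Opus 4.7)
The strategy is to derive a pathwise energy inequality for $v$ from the random PDE \eqref{Equation: differ form for v infinity}, close it by Gronwall on $[t_0,t]$ with $t\in[-1,0]$, and then pass $t_0 \to -\infty$ using the Birkhoff relation \eqref{Equation: Ergodic}. First I would test the equation against $v$. The orthogonality $b(u,v,v)=0$ kills $b(v+Z,v,v)$, leaving only the cross terms $a(Z,v)$, $b(v,Z,v)$, $b(Z,Z,v)$, and a contribution from $\langle N(v+Z),v\rangle$; all except $b(v,Z,v)$ are routine via Cauchy--Schwarz and Young, since $|Z|_V$ is pathwise bounded on $[-1,0]$ by Proposition~\ref{Lemma: continuous modification for fBm}. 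For $b(v,Z,v)$, the Ladyzhenskaya estimate $b(u,v,w)\le C_1 |u|^{1/2}|u|_{H_0^1}^{1/2}|v|_{H_0^1}|w|^{1/2}|w|_{H_0^1}^{1/2}$ combined with interpolation $|v|_{H_0^1}\le C|v|^{1/2}|v|_V^{1/2}$ and Young's inequality gives
\begin{equation*}
|b(v,Z,v)| \le \tfrac{c_1}{4}|v|_V^2 + C\,C_1^2\,|Z|_{H_0^1}^2\,|v|^2.
\end{equation*}
Combining all contributions with $(Av,v)\ge c_1|v|_V^2$ and $|v|_V^2\ge \lambda_1|v|^2$, I arrive at a pathwise differential inequality
\begin{equation*}
\frac{d}{dt}|v|^2 + \eta\,|v|^2 + \frac{c_1}{2}|v|_V^2 \le \alpha(t)\,|v|^2 + g(t),
\qquad \alpha(t):=C\,C_1^2\,|Z(\theta_t\omega)|_{H_0^1}^2,
\end{equation*}
with $\eta>0$ proportional to $c_1\lambda_1$ and $g$ a locally integrable function of $Z$ alone.

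Gronwall on $[t_0,t]$ then yields, for every $t\in[-1,0]$,
\begin{equation*}
|v(t)|^2 \le e^{-\eta(t-t_0)+\int_{t_0}^t\alpha(s)ds}|u_0-Z(\theta_{t_0}\omega)|^2 + \int_{t_0}^t e^{-\eta(t-s)+\int_s^t\alpha(r)dr}g(s)\,ds.
\end{equation*}
By \eqref{Equation: Ergodic}, the time average $\frac{1}{-t_0}\int_{t_0}^0\alpha(s)ds$ converges P-a.s.\ to $C C_1^2\mathbb{E}|Z|_{H_0^1}^2\le 2CC_1^2 c_0\beta_D(4)\zeta(4)$. The hypothesis $c_0C_1^2<1/(\beta_D(4)\zeta(4))$ is calibrated so that, with the absorption constant $C$ tracked sharply, this average is strictly below $\eta$, forcing $-\eta(t-t_0)+\int_{t_0}^t\alpha \to -\infty$ as $t_0\to-\infty$. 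Together with the at-most-polynomial growth of $|Z(\theta_{t_0}\omega)|$ (point (iii) of the Introduction), the initial-data term decays and the Gronwall integral converges uniformly in $t\in[-1,0]$ and in $|u_0|\le M$ to a finite random radius $\rho_H(\omega)$, provided $t_0 \le t_2(\omega)$ for some sufficiently negative $t_2(\omega)$. The bound on $|u|^2=|v+Z|^2$ follows by the triangle inequality after enlarging $\rho_H$ by $2\sup_{t\in[-1,0]}|Z(\theta_t\omega)|^2$, which is a.s.\ finite.

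The two $V$-integral bounds come from integrating the same differential inequality over $[-1,0]$: this controls $\tfrac{c_1}{2}\int_{-1}^0|v|_V^2 dt$ by $|v(-1)|^2$ (already bounded by $\rho_H(\omega)$) plus $\int_{-1}^0(\alpha(t)|v(t)|^2+g(t))dt$, whose terms are each P-a.s.\ finite thanks to the $V$-continuity of $Z(\theta_\cdot\omega)$. This produces $\rho_1(\omega)$ for $v$; the bound on $v+Z$ is obtained by enlarging it further via $2\int_{-1}^0|Z|_V^2\,dt$. The principal difficulty will be the sharp bookkeeping of the absorption constant $C$: the hypothesis is tight, and any loss in the Young-type step for $b(v,Z,v)$ would require a stronger smallness assumption. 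A secondary technical point is verifying that the polynomial growth of $|Z(\theta_{t_0}\omega)|$ is dominated by the decaying exponential uniformly over $|u_0|\le M$, which is a standard slowly-varying-envelope argument.
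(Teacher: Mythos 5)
Your overall strategy --- test against $v$, Gronwall on $[t_0,t]$, invoke the Birkhoff average \eqref{Equation: Ergodic} to make the exponent decay as $t_0\to-\infty$, use the polynomial growth of the noise to control the Gronwall integral, then integrate over $[-1,0]$ for the two $V$-bounds --- is exactly the paper's. But your sketch glosses over the one term for which the bare energy method does not work. You list the contribution of $\langle N(v+Z),v\rangle$ among the terms that are ``routine via Cauchy--Schwarz and Young.'' It is not: the only generic bound on the nonlinear viscosity is $|\langle N(u),w\rangle|\le 2\mu_0\epsilon^{-\alpha/2}|u|_{H_0^1}|w|_{H_0^1}$, so a direct Cauchy--Schwarz on $\langle N(v+Z),v\rangle$ produces the term $2\mu_0\epsilon^{-\alpha/2}|v|_{H_0^1}^2$ with a \emph{fixed, possibly large} coefficient. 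Absorbing it through $|v|_{H_0^1}^2\le \delta|v|_V^2+C_\delta|v|^2$ then subtracts a non-small, $\mu_0$- and $\epsilon$-dependent quantity from your decay rate $\eta$, whereas the hypothesis of the lemma involves neither $\mu_0$ nor $\epsilon$. The paper's proof avoids this by first using the positivity $\langle N(u),u\rangle\ge 0$ (Bloom--Hao) to discard the full nonlinear self-interaction, so that only $-\langle N(Z),v\rangle$ survives; the coefficient in front of $|v|_{H_0^1}^2$ is then a free small parameter $r_1$. Without this step your $\eta$ need not dominate the ergodic average of $\alpha(t)$, and the argument can fail for large $\mu_0$.

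The second issue is the one you yourself flag as ``the principal difficulty'' but do not resolve: with an untracked absorption constant $C$ in $\alpha(t)=C\,C_1^2|Z(\theta_t\omega)|_{H_0^1}^2$, it is not established that $2C\,C_1^2 c_0\beta_D(4)\zeta(4)<\eta$, and that inequality is the entire content of the smallness hypothesis. The paper closes this by keeping the Young parameter free: splitting $b(v+Z,Z,v+Z)$ with a parameter $C_2$ gives coefficient $C_1/C_2$ on $|Z|_{H_0^1}^2|v|^2$ and $C_1C_2$ on $|v+Z|_{H_0^1}^2$, and the hypothesis $c_0C_1^2<1/(\beta_D(4)\zeta(4))$ is precisely what makes the interval $\bigl(c_0C_1\beta_D(4)\zeta(4),\,1/C_1\bigr)$ nonempty, the lower endpoint guaranteeing $\tfrac{C_1}{C_2}\,2c_0\beta_D(4)\zeta(4)<2\le\lambda_1/2$ and the upper endpoint letting $C_1C_2|v|_{H_0^1}^2$ be absorbed into $|v|_V^2$ using $\lambda_1\ge 4$. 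You need this two-sided choice (or an equivalent sharp bookkeeping); your single interpolation-plus-Young step, which moreover yields $|Z|_{H_0^1}^{4/3}$ rather than $|Z|_{H_0^1}^2$ unless arranged carefully, does not by itself show the stated hypothesis suffices.
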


\begin{proof}
  The proof is similar to \cite{Li Huang} Proposition 3.7.
  Multiple \eqref{Equation: differential form for v} by $v(t)$
and then integrate over $\mathcal{O}$. We have
\begin{equation}
  \begin{split}
    \frac{1}{2} \frac{d|v(t)|^2}{dt} +|v(t)|^2_V
    & = - b\left(v(t)+Z(t),v(t)+Z(t),v(t)\right)  - <N(v(t)+Z(t)),v(t)> \\
    &\leq |b \left( v(t)+Z(t),Z(t),v(t)+Z(t)\right)| - <N(Z(t)),v(t)>.
  \end{split}
\end{equation}
The above inequality take advantage of $<N(v),v> \ \geq 0$ (see \cite{Bloom Hao existence})
and \eqref{Equation: orthogonal for b}.

In the sequel we omit the time variable $t$.
Firstly we estimate trilinear form $b$.
\begin{equation}
    \begin{split}
      & \quad b\left(v+Z,Z,v+Z\right) \\
      & \leq C_1 |v+Z| \cdot
        |Z|_{H_0^1} \cdot
        |v+Z|_{H_0^1}  \\
      & \leq \frac{C_1}{2C_2} |Z|^2_{H_0^1} \cdot |v+Z|^2
        + \frac{C_1 C_2}{2} |v+Z|^2_{H_0^1} \\
      & \leq \frac{C_1}{C_2} |Z|_{H_0^1}^2 |v|^2
        + C_1 C_2 |v|_{H_0^1}^2
        + \frac{C_1}{C_2} |Z|^2 |Z|_{H_0^1}^2
        + C_1 C_2 |Z|_{H_0^1}^2,
    \end{split}
\end{equation}
where $C_2$ is a positive constant which will be specified later.
Secondly we estimate nonlinear term $N$.
For all $r_1 >0$, we have
\begin{equation}
  \begin{split}
           -<N(Z),v> \
     \leq & \mu_0 \epsilon^{- \alpha /2} |Z|_{H_0^1} |v|_{H_0^1} \\
     \leq & r_1 |v|_{H_0^1}^2 + \frac{\mu_0^2 }{4r_1 \epsilon^{\alpha}} |Z|_{H_0^1}^2.
  \end{split}
\end{equation}
Comprehensively,
\begin{equation}
  \begin{split}
      & \frac{1}{2} \frac{d}{dt}|v|^2
      +\frac{\lambda_1}{2}|v|^2
      +\frac{1}{2} |v|^2_V \\
    \leq & \frac{C_1 }{C_2} |Z|_{H_0^1}^2 |v|^2
      + (C_1 C_2 +r_1)|v|_{H_0^1}^2
      + \frac{C_1}{C_2} |Z|^2 |Z|_{H_0^1}^2
      + C_1 C_2 |Z|_{H_0^1}^2
      +  \frac{\mu_0^2 }{4r_1 \epsilon^{\alpha}} |Z|_{H_0^1}^2,
  \end{split}
\end{equation}
where $\lambda_1>4$ is the first eigenvalue of operator $A$.
  Let $g_2
      =\frac{C_1}{C_2} |Z|^2 |Z|_{H_0^1}^2
      + C_1 C_2 |Z|_{H_0^1}^2
      +  \frac{\mu_0^2 }{4r_1 \epsilon^{\alpha}} |Z|_{H_0^1}^2$.
Then we have
  \begin{equation}\label{Equation: estimate v infty diff form}
    \frac{d}{dt} |v|^2
      + \left( \frac{1}{2}-\frac{C_1 C_2 +r_1 }{\lambda_1^{\frac{1}{2}}} \right) |v|_V^2
      + \left( \frac{\lambda_1}{2} - \frac{C_1 |Z|_{H_0^1}^2}{C_2}  \right) |v|^2
    \leq g_2 .
  \end{equation}
By assumption we can choose
$C_2 \in \left( c_0 C_1 \beta_D(4) \zeta(4)  ,   \frac{1}{C_1} \right)$
and $r_1$ small enough and we have
\begin{equation}\label{Equation: estimate v infty diff simp form }
    \frac{d}{dt} |v|^2
      + \left( \frac{\lambda_1}{2}  - \frac{C_1 |Z|_{H_0^1}^2}{C_2}  \right) |v|^2
    \leq g_2 .
\end{equation}

By Gronwall inequality, when $t\in [-1,0]$ and $t_0<-1$, we have
\begin{equation}
  \begin{split}
      & |v(t)|^2  \\
     \leq & |v(t_0)|^2 e^{ -\int_{t_0}^{t} \left( \frac{\lambda_1}{2} - \frac{C_1 |Z(s)|_{H_0^1}^2}{C_2} \right) ds }
       + \int_{t_0}^{t} g_2(s_1) e^{ -\int_{s_1}^{t}
       \left( \frac{\lambda_1}{2} - \frac{C_1 |Z(s_2)|_{H_0^1}^2}{C_2} \right)ds_2 }ds_1 \\
     \leq & |v(t_0)|^2 e^{ -\int_{t_0}^{0} \left( \frac{\lambda_1}{2} - \frac{C_1 |Z(s)|_{H_0^1}^2}{C_2} \right) ds }
       + \int_{t_0}^{0} g_2(s_1) e^{ -\int_{s_1}^{0}
       \left( \frac{\lambda_1}{2} - \frac{C_1 |Z(s_2)|_{H_0^1}^2}{C_2} \right)ds_2 }ds_1.
  \end{split}
\end{equation}
Due to the ergodic property of fractional O-U process \eqref{Equation: Ergodic}, we have
\begin{equation}
    \lim_{t_0 \rightarrow -\infty} \frac{1}{-t_0}
    \int_{t_0}^0 |Z(s)|^2_{H_0^1} ds
    = \mathbb{E} |Z(\omega)|_{H_0^1} .
\end{equation}
Choose $r_2$ small enough such that
\begin{equation}
    \frac{C_1}{C_2} \mathbb{E} |Z(\omega)|_{H_0^1}
    \leq \frac{C_1}{C_2} 2c_0
    \beta_D(4) \cdot \zeta(4)
    < 2 - r_2
    \leq \frac{\lambda_1}{2} -r_2.
\end{equation}
Then there exists $t_1(\omega) < -1$, such that when $t_0 < t_1$ we have
\begin{equation}
    |v(t)|^2
    \leq e^{(1+t_0)r_2} |u_0|^2
    + \int_{t_0}^{0} e^{(1+t_0)r_2} g_2(s) ds,
         \quad \forall t \in [-1,0].
\end{equation}

By Lemma 2.6 of \cite{Maslowski Schmalfus},
$g_2$ has at most polynomial growth as $t_0 \rightarrow -\infty$ for P-a.s.
$\omega \in \Omega$.
Thus, we have
\begin{equation}
    \int_{t_0}^{0} g_2(s) e^{(1+s)r_2} ds
    \leq \int_{-\infty}^{0} g_2(s) e^{(1+s)r_2} ds
    \leq \infty, \quad \text{P-a.s.}
\end{equation}
Let $\rho_H = 4\int_{-\infty}^{0} g_2(s) e^{(1+s)r_2} ds + 2 \sup_{t \in [-1,0]} |Z(t)|^2$
and there exists $t_2(\omega) < t_1(\omega) < -1$ such that for all $|u_0| \leq M$
\begin{align}
    |v(-1,\omega ; t_0,u_0-Z(\theta_{t_0}\omega))|^2
      &\leq 2\int_{-\infty}^{0} g_2(s) e^{(1+s)r_2} ds \\
    \begin{split}
    |u(-1,\omega ; t_0,u_0)|^2
      &\leq 2 |v(-1,\omega ; t_0,u_0-Z(\theta_{t_0}\omega))|^2
      + 2 \sup_{t \in [-1,0]} |Z(t)|^2 \\
      &\leq \rho_H(\omega), \quad  \forall t_0 < t_2, t \in [-1,0].
    \end{split}
\end{align}

In the following we consider a bound of $\int_{-1}^0 |v(t)|_V^2 dt$.
Integrating \eqref{Equation: estimate v infty diff form} over $[-1,0]$ we have
\begin{equation}
    |v(0)|^2 - |v(-1)|^2 + c_6^{-1} \int_{-1}^0 |v(t)|_V^2 dt
    \leq \int_{-1}^{0} g_2(t)dt + \int_{-1}^0 ( \frac{C_1}{C_2} |Z(t)|_1^2 )|v(t)|^2 dt.
\end{equation}
When $t_0<t_2$ we have
\begin{equation}
      \int_{-1}^0 |v(t)|_V^2 dt
     \leq c_6 (
      \int_{-1}^{0} g_2(t)dt
      +  \frac{C_1 \rho_H}{C_2} \int_{-1}^0 |Z(t)|_1^2  dt
      + |v(-1)|^2)
     \triangleq C(\omega).
\end{equation}
Similarly,
\begin{equation}
      \int_{-1}^0 |v(t)+Z(t)|_V^2 dt
     \leq 2c_6 (
      \int_{-1}^{0} g_2(t)dt
      +  \frac{C_1 \rho_H}{C_2} \int_{-1}^0 |Z(t)|_1^2  dt
      + 2 \int_{-1}^0 |Z(t)|_V^2 dt)
     \triangleq \widetilde{C}(\omega) .
\end{equation}
Let $\rho_1(\omega) = \max \{C(\omega),\widetilde{C}(\omega)\}$ and the proof is complete.
\end{proof}

By the same argument as in \cite{Li Huang} Lemma 4.3, we have the following lemma.
\begin{lemma}\label{Lemma: absorb in V}
  Under the assumption of Lemma \ref{Lemma: absorb in H},
  there exists a random radius $\rho_V(\omega)$ such that for all $M>0$
  and $|u_0| < M $, there exists $t_2(\omega)<-1$ such that P-a.s.
  \begin{align}
     |v(t,\omega;t_0,u_0 - Z(\theta_{t_0}\omega))|_1^2
        & \leq \rho_V(\omega), \\
     |u(t,\omega;t_0,u_0 )|_1^2
        & \leq \rho_V(\omega), \quad \forall t_0 < t_2, t \in [-\frac{1}{2} ,0].
  \end{align}
\end{lemma}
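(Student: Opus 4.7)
The plan is to bootstrap the integrated estimate $\int_{-1}^0 |v(s)|_V^2\, ds \leq \rho_1(\omega)$ from Lemma \ref{Lemma: absorb in H} to a pointwise bound at the $V$ level via a uniform Gronwall argument on the sub-interval $[-1,-1/2]$. Since $V \hookrightarrow \dot{H}^1$, controlling $|v(t)|_V^2$ controls $|v(t)|_1^2$, and writing $u = v + Z$ together with the pathwise continuity of $Z$ in $V$ from Proposition \ref{Lemma: continuous modification for fBm} then yields the corresponding bound for $u$.

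The decisive analytic step is to pair the equation \eqref{Equation: differ form for v infinity} with $Av$ in the $V'$--$V$ duality. Using $(v_t, Av) = \tfrac{1}{2}\frac{d}{dt}a(v,v)$, the norm equivalence from Proposition \ref{Proposition: a} (iii), and $(Av,Av) = |Av|^2$, one obtains
\[
\frac{1}{2}\frac{d}{dt}|v|_V^2 + |Av|^2 = -(AZ,Av) - \langle B(v+Z), Av\rangle - \langle N(v+Z), Av\rangle.
\]
I would then control the right-hand side by Young's inequality, reserving a fraction of $|Av|^2$ to absorb the quadratic contributions on the left. For the convective term I would split $B(v+Z) = B(v+Z,v) + B(v+Z,Z)$ and apply the 2D Ladyzhenskaya inequality recorded before Lemma \ref{Lemma: absorb in H}, distributing one factor of $|\cdot|$ (in $H$) and one factor of $|\cdot|_V$ against $|Av|$; the non-Newtonian term is handled by the continuity $N: V \to V'$ recorded in Section 2. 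Collecting everything produces a differential inequality of the form
\[
\frac{d}{dt}|v|_V^2 \leq g(t)\,|v|_V^2 + h(t),
\]
where the coefficients $g,h$ are P-a.s.\ integrable on $[-1,0]$, with bounds that depend only on $\rho_H(\omega)$, $\rho_1(\omega)$, and pathwise $V$-norms of $Z$.

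The uniform Gronwall lemma is then applied on $[-1,-1/2]$: the integral bound $\int_{-1}^0 |v|_V^2\,ds \leq \rho_1(\omega)$ guarantees the existence of some $s_0 \in [-1,-1/2]$ for which $|v(s_0)|_V^2$ is P-a.s.\ finite and dominated by $2\rho_1(\omega)$, after which integrating the differential inequality from $s_0$ to $t \in [-1/2,0]$ delivers $|v(t)|_V^2 \leq \rho_V(\omega)$. The main obstacle will be the trilinear estimate: because the bipolar model places $V$ at the $H^2$ level, pairing $B(v+Z)$ with $Av$ involves four derivatives altogether, and the pieces containing $Z$ must be controlled using the $V$-regularity of the stochastic convolution established in Section 3 rather than just its $H$-regularity, which is precisely where the refined convolution bounds of Section 3 become indispensable and distinguish this $V$-level lemma from the $H$-level absorption of Lemma \ref{Lemma: absorb in H}.
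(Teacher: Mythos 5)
Your overall strategy --- a higher-order energy estimate combined with a uniform Gronwall argument on $[-1,-1/2]$, seeded by the integrated bound $\int_{-1}^0|v|_V^2\,dt\le\rho_1(\omega)$ from Lemma \ref{Lemma: absorb in H} --- is exactly the right skeleton, and it is what the paper implicitly invokes by citing Lemma 4.3 of \cite{Li Huang}. However, the specific multiplier you choose creates two concrete problems. First, your claim that the term $\langle N(v+Z),Av\rangle$ "is handled by the continuity $N:V\to V'$" does not work: that continuity only controls $\langle N(u),w\rangle$ by $\|N(u)\|_{V'}\|w\|_V$, and taking $w=Av$ would require $Av\in V$, i.e.\ $v\in\mathscr{F}_{3/2}=\dot H^6$, which is nowhere near the regularity you have (the dissipation produced by your own identity is only $|Av|^2$, i.e.\ $D(A)=\dot H^4$). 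To pair $N(u)$ against $Av$ you would need $N(u)\in H$, which forces you to differentiate the nonlinear viscosity $\mu(u)=2\mu_0(\epsilon+|e|^2)^{-\alpha/2}$ and estimate products like $|\nabla e(u)|\,|e(u)|$ in $L^2$; that is doable but is a genuinely different (and harder) estimate than the one you describe. Second, the term $-(AZ,Av)$ in your energy identity is uncontrollable with the regularity available: Section 3 only gives $Z\in C([0,\infty);V)$ pathwise, so $AZ$ lives in $V'$ and cannot be paired with $Av$ (nor bounded by $|AZ|\,|Av|$, which would need $Z\in\dot H^4$). In fact this term should not be present at all: since the stationary Ornstein--Uhlenbeck process satisfies $dZ+AZ\,dt=dB^H$, the difference $v=u-Z$ solves $v_t+Av+B(v+Z)+N(v+Z)=0$ with no residual $AZ$ (the $A(v+Z)$ in the paper's display \eqref{Equation: differ form for v infinity} is an inconsistency with the proof of Lemma \ref{Lemma: absorb in H}, which uses the $AZ$-free form). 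Taking that display at face value imports a term your argument cannot absorb.

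Both difficulties disappear if you aim only at what the lemma actually asserts, namely an $\dot H^1$ bound rather than a $V=\dot H^2$ bound. Pairing the ($AZ$-free) equation with $A^{1/2}v$ gives $\tfrac12\tfrac{d}{dt}|A^{1/4}v|^2+|A^{3/4}v|^2$ on the left, i.e.\ the $\dot H^1$ norm with $\dot H^3$ dissipation; the non-Newtonian term is then estimated directly from its definition, $\langle N(u),A^{1/2}v\rangle\le 2\mu_0\epsilon^{-\alpha/2}|u|_1\,|A^{1/2}v|_1=C|u|_1|A^{3/4}v|\le\tfrac14|A^{3/4}v|^2+C|u|_1^2$, using only first-derivative information, and the convective term closes by interpolation $|v|_V^2\le|v|_1|v|_3$ with a Gronwall coefficient that is integrable on $[-1,0]$ precisely because of $\rho_H$ and $\rho_1$. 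This is the level at which the available regularity of $Z$ and the recorded estimates on $b$ and $N$ actually suffice, and it is presumably the route of \cite{Li Huang} Lemma 4.3. As written, your proof at the $Av$ level has a genuine gap at the $N$-term and the $AZ$-term.
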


Lemma \ref{Lemma: absorb in V} shows that
there exists a bounded random ball in $\dot{H}^1$
which absorbs any bounded non-random subset of $H$.
Since $\dot{H}^1$ is compactly embedded in $H$,
we have establish the existence of a compact random absorbing set in $H$.
We now state the final theorem and
the proof is similar to that of Theorem 5.6 in \cite{Crauel Flandoli}.

\begin{theorem}
  If $c_0 C_1^2 < \frac{1}{\beta_D(4)  \zeta(4)} $,
  then the random dynamical system associated with \eqref{Equation: integral form}
  has a random attractor.
\end{theorem}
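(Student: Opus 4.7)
The plan is to invoke the standard Crauel–Flandoli framework: given a continuous random dynamical system possessing a compact random absorbing set, the omega-limit set of that absorbing set yields the random attractor. The continuity and cocycle property of $\phi(t,\omega,u_0)=v(t,\omega;0,u_0-Z(\omega))+Z(\theta_t\omega)$ were already established in the discussion following equation \eqref{Equation: integral form for v infty}, so the only remaining ingredient is the production of a compact absorbing set.

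First, I would use Lemma \ref{Lemma: absorb in V} directly. That lemma, under the hypothesis $c_0 C_1^2<1/(\beta_D(4)\zeta(4))$, produces a random radius $\rho_V(\omega)$ such that for any non-random bounded set $B\subset H$, say $B\subset B_H(M)$, there is a time $t_2(\omega)<-1$ after which the trajectory $u(t,\omega;t_0,u_0)$ lies in the $\dot H^1$-ball of radius $\rho_V(\omega)^{1/2}$ for $t\in[-1/2,0]$, uniformly in $u_0\in B$. Thus the set $K(\omega):=\overline{B_{\dot H^1}(\rho_V(\omega)^{1/2})}$ absorbs $B$ at time $t=0$ pulled back from $t_0\to-\infty$.

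Second, because the inclusion $\dot H^1\hookrightarrow H$ is compact (a consequence of the compact embedding $\mathscr F_{\sigma_1}\hookrightarrow \mathscr F_{\sigma_2}$ for $\sigma_1>\sigma_2$ recorded after the definition of fractional powers of $A$), the $H$-closure of $K(\omega)$ is compact in $H$. Moreover, by translating the time variable through the cocycle property, the same absorption holds at $t=0$ for any starting time, so $\{K(\omega)\}_{\omega\in\Omega}$ is a compact random absorbing set in the sense required by Crauel–Flandoli. A short measurability check, which follows from the continuity of $\rho_V$ in $\omega$ inherited from the sample-path regularity of the fractional O-U process $Z(\theta_\cdot\omega)$, shows $K$ is indeed a random compact set.

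Finally, I would conclude by quoting the abstract theorem of Crauel–Flandoli (Theorem 5.6 in \cite{Crauel Flandoli}): a continuous RDS on a Polish space admitting a compact random absorbing set possesses a random attractor, given explicitly by
\begin{equation}
\mathcal A(\omega)=\overline{\bigcup_{B\subset H \text{ bounded}}\Lambda_B(\omega)},\qquad \Lambda_B(\omega)=\bigcap_{s\le 0}\overline{\bigcup_{t_0\le s}\phi(-t_0,\theta_{t_0}\omega,B)}.
\end{equation}
The main (and essentially only) obstacle is verifying the hypotheses of the Crauel–Flandoli theorem carefully, in particular the pullback absorption for every bounded deterministic set and the measurability of $K(\omega)$; both reduce to properties already packaged in Lemma \ref{Lemma: absorb in V} together with the ergodic estimate \eqref{Equation: Ergodic}, so no new calculation is needed beyond assembling these pieces.
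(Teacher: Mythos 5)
Your proposal is correct and follows essentially the same route as the paper: the paper likewise obtains a compact random absorbing set from Lemma \ref{Lemma: absorb in V} together with the compact embedding $\dot H^1 \hookrightarrow H$, and then invokes Theorem 5.6 of \cite{Crauel Flandoli}. Your write-up is in fact slightly more explicit than the paper's (which only sketches this in two sentences), particularly on the pullback absorption and measurability points.
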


\end{document}